\newcommand{\ddi}{\mathrm{d}^\diamond}
\newcommand{\di}{\diamond}
\newcommand{\dd}{\mathrm{d}}
\newcommand{\ist}{\int_{s}^{t}}
\newcommand{\ott}{[0,T]}
\newcommand{\pv}{p-{\rm var}}
\newcommand{\qv}{q-{\rm var}}
\newcommand{\lbar}[1]{\underaccent{\bar}{#1}}
\newcommand{\bd}{\mathbf{D}}
\newcommand{\be}{\mathbf{E}}
\newcommand{\bp}{\mathbf{P}}
\newcommand{\bx}{{\bf x}}
\newcommand{\1}{{\bf 1}}
\newcommand{\2}{{\bf 2}}
\newcommand{\E}{\mathbb E}
\newcommand{\D}{\mathbb D}
\newcommand{\R}{\mathbb R}
\newcommand{\cac}{\mathcal C}
\newcommand{\cf}{\mathcal F}
\newcommand{\ch}{\mathcal H}
\newcommand{\cs}{\mathcal S}
\newcommand{\ep}{\varepsilon}
\newcommand{\ga}{\gamma}
\newcommand{\oom}{\Omega}
\newcommand{\si}{\sigma}
\newcommand{\lp}{\left(}
\newcommand{\rp}{\right)}
\newcommand{\lc}{\left[}
\newcommand{\rc}{\right]}
\newcommand{\lcl}{\left\{}
\newcommand{\rcl}{\right\}}
\newcommand{\lla}{\left\langle}
\newcommand{\rra}{\right\rangle}
\newtheorem{theorem}{Theorem}[section]
\newtheorem{definition}[theorem]{Definition}
\newtheorem{hypothesis}[theorem]{Hypothesis}
\newtheorem{lemma}[theorem]{Lemma}
\newtheorem{proposition}[theorem]{Proposition}
\theoremstyle{remark}
\newtheorem{remark}[theorem]{Remark}
\def\ll{\llbracket}
\def\rr{\rrbracket}
\begin{document}
\title[Skorohod and Stratonovich integrals]
{Skorohod and Stratonovich integrals \\ for controlled processes}
\author{Jian Song and  Samy Tindel}
\address{
	Jian Song: Research Center for Mathematics and Interdisciplinary Sciences,  Shandong University,  Qingdao, Shandong, 266237, China and 
	School of Mathematics, Shandong University, Jinan, Shandong, 250100,  China}
	\email{txjsong@hotmail.com}
	\thanks{J.S. is partially supported by Shandong University grant 11140089963041.}
	
	\address{
	Samy Tindel: Department of Mathematics, Purdue University, 150 N. University Street West Lafayette, Indiana 47907, USA}
\email{stindel@purdue.edu}
\thanks{S.T. is partially supported by the National Science Foundation under
grant DMS-1952966.}

\begin{abstract}
Given a continuous Gaussian process $x$ which gives rise to a $p$-geometric rough path for $p\in (2,3)$, and a general continuous process $y$ controlled by $x$, under proper conditions we establish the relationship between the Skorohod integral $\int_0^t y_s \dd^\diamond x_s$ and the Stratonovich integral $\int_0^t y_s \dd \bx_s$. Our strategy is to employ the tools from rough paths theory and Malliavin calculus to analyze  discrete sums of the integrals. 
\end{abstract}

\maketitle

{
\hypersetup{linkcolor=black}
\tableofcontents 
}

\section{Introduction}
For sake of clarity, we will divide this introduction in 3 parts. In Section~\ref{sec:background} we motivate our problem and recall some previous contributions giving Stratonovich-Skorohod corrections. Section~\ref{sec:main-result} is devoted to a description of our main result, the strategy employed in the article, and some perspectives for future works. At the end, some notations used in this article are introduced in Section~\ref{sec:notation}. 

\subsection{Background}\label{sec:background}

In recent decades, two approaches for the analysis of dynamical systems driven by Gaussian processes have been greatly developed: (i) the ``probabilistic'' approach, which invokes stochastic analysis tools and leads to It\^o-Skorohod integration, and (ii) the ``pathwise'' approach which employs the theory of  rough paths  and gives rise to Stratonovich integration. In general, one gets a more transparent understanding of the system by using the pathwise approach,  while it is more convenient to explore probabilistic properties (e.g. compute the moments for the solution of a noisy  dynamical system driven by a Gaussian noise) via the probabilistic approach.  One key ingredient to understand the connection between these two approaches is the relationship between Skorohod and Stratonovich integrals. 

For a standard Brownian motion, the relationship between  It\^o and Stratonovich integrals is well-known. It is classically obtained by It\^o calculus, although rough paths theory can also be invoked by observing that both It\^o and Stratonovich integrals can be regarded as integrals against rough paths lifted from a Brownian motion with different second order terms. For general Gaussian processes (consider fractional Brownian motion as a typical example), however, it is non-trivial to obtain the relationship. Indeed, for a general Gaussian process It\^o calculus (or martingale calculus) is not available, and moreover Skorohod integrals cannot be regarded as integrals against rough paths lifted from the corresponding Gaussian processes. We briefly recall some  results giving Skorohod-Stratonovich corrections below. 

Let $x=(x^1, \dots, x^d)$ be a $d$-dimensional centered Gaussian process with i.i.d components giving rise to a  $p$-geometric rough path, where $2<p<3$ (see Section~\ref{sec:rough-path-above-X} for more details about geometric rough paths).  Denote by $R$  the covariance function of $x$, namely $R(s,t)=\be[x_t^1x_s^1]$. We also set $R_t=R(t,t).$  The correction terms between Skorohod and Stratonovich integrals with respect to $x$ have been considered in the following cases: 

\begin{enumerate}[wide, labelwidth=!, labelindent=0pt, label=\textit{(\roman*)}]
\setlength\itemsep{.1in}

\item
In \cite{HJT}, the Skorohod-Stratonovich corrections were computed for integrals of the form $\int_{s}^{t}  \nabla f(x_{u}) \ddi x_{u}$ for a smooth function $f$ defined on $\R^{d}$. More specifically, 
\begin{equation}\label{eq:strato-sko-1}
\int_0^t \nabla f(x_r) \dd\bx_r= \int_{0}^{t}  \nabla f(x_{r}) \ddi x_{r} +\frac12\int_0^t \Delta f(x_r) \dd R_r,
\end{equation}
where the integral with respect to $x$ on the left-hand side is a Stratonovich integral while the one on the right-hand side is a Skorohod integral.  The strategy in \cite{HJT} relied on the fact that $\int_{0}^{t}  \nabla f(x_{u}) \ddi x_{u}$ is obtained by taking limits of Riemann-Wick sums of the form:
\begin{equation}\label{eq:riemann-wick-sum}
S^{\Pi_{st},\di}=\sum_{i=0}^{n-1} \sum_{k=1}^{N} \frac{1}{k!}
f^{(k)}(x_{t_i}) \di  {\big(\bx^{\1}_{t_{i} t_{i+1}}\big)}^{\di k},
\end{equation}
where $\di$ stands for the Wick product.
Then the Skorohod-Stratonovich corrections in \cite{HJT} were analyzed thanks to a computation of the Wick corrections in \eqref{eq:riemann-wick-sum}. Notice that an extension of this result to the case of a Gaussian process indexed by $[0,1]^2$ with H\"older exponent greater than $1/3$ was handled in~\cite{SC15}. There some change of variable formulas were derived  for the Stratonovich and Skorohod integrals respectively. As a consequence, the correction terms were computed explicitly. Also note that some preliminary cases for a 1-d fractional Brownian motion had been considered in \cite{NTa}.

\item 
The reference \cite{CL} is concerned with solutions of rough differential equations driven by $x$, where $x$ is again a $d$-dimensional centered Gaussian process with i.i.d components giving rise to a  $p$-geometric rough path (recall that $2<p<3$). The equation can be written as
\begin{equation}\label{eq:rde}
\dd y_r=\sigma(y_r) \dd \bx_r, 
\end{equation}
with a smooth enough coefficient $\si:\R^d\to \R^{d\times d}$, and we refer to \cite{FV-bk} for more details about this object. Below we denote by $y_{0}$ the initial condition of \eqref{eq:rde}, and $J_u^x$ is designated as the Jacobian of the flow map $y_0\to y_u$. Then the formula for the correction terms in~\cite{CL} can be read as
\begin{multline}\label{eq:strato-sko-rde-case}
\int_{0}^{t} y_{r} \dd \bx_{r} 
=
\int_{0}^{t} y_{r} \ddi x_{r} + \frac12\int_0^t\text{tr}[\sigma(y_r)]\dd R_r \\
+\int_{[0< r_1< r_2< t]} \text{tr} \left[J_{r_2}^x(J_{r_1}^x)^{-1} \sigma(y_{r_1})-\sigma(y_{r_2})\right]\dd R(r_1,r_2).
\end{multline}
Consider the $i$-th column $\sigma_i(x)$ of the coefficient matrix $\sigma(x)$ as  a vector field on $\R^d$ for $1\le i\le d$. If the Lie bracket $[\sigma_i, \sigma_j]=\sigma_i\sigma_j-\sigma_j\sigma_i =0$ for $1\le i\le j\le  d$, then the solution $y_t$ to \eqref{eq:rde} is of the form $y_t=\varphi(x_t, y_0)$ with $(\nabla_x\cdot \varphi)(x, y_0)=\text{tr}[\sigma(x, y_0)]$ and $J_t^x=\sigma(y_t)$ (see \cite[Proposition 24]{BC}). Clearly in this case, \eqref{eq:strato-sko-rde-case} coincides with \eqref{eq:strato-sko-1},  noting that the last term on the right-hand side of \eqref{eq:strato-sko-rde-case} now vanishes.  Therefore, relation \eqref{eq:strato-sko-rde-case} is indeed compatible with~\eqref{eq:strato-sko-1}.

\end{enumerate}

\subsection{Main result and strategy}\label{sec:main-result}

In this paper, we consider a  $d$-dimensional centered Gaussian process $x$ with i.i.d components.  Let $y$ be a  \emph{controlled} process relative to $x$. That is, the increments $\delta y_{st}:= y_{t}-y_{s}$ can be decomposed along the increments of $x$ as follows:
\begin{equation}\label{eq:ctrld-proc-intro}
\delta y_{st}^{i}
= \sum_{j=1}^dy_{s}^{x;ij} \, \bx_{st}^{\1;j} +  r_{st}^{i}, ~\text{ for }  i=1, \dots, d,
\end{equation}
where  $y^{x}$ has finite $p$-variation and the remainder $r$ has finite $\frac{p}{2}$-variation (one can alternatively use H\"older spaces in this definition).  Notice that controlled processes are the natural class of functions for which a proper rough integration with respect to $x$ can be constructed (see e.g \cite{Gu}).
The following is the main result of this paper (see Theorem \ref{thm} below for a more precise statement). Under proper conditions on $x$ and $y$, 
\begin{equation}\label{eq:main-result}
\int_{0}^{t} y_r \, \dd \bx_{r}
=
\int_{0}^{t} y_r \ddi x_r 
+\frac12 \sum_{i=1}^d \int_{0}^{t} y_{r}^{x;ii} \dd R_r 
+\sum_{i=1}^d  \int_{[0<r_1<r_2<t]} \left(  \bd^i_{r_{1}} y^i_{r_{2}} -y_{r_{2}}^{x;ii}\right)\dd R(r_{1},r_{2}).
\end{equation}
 On the left-hand side of \eqref{eq:main-result}, the integral $\int_0^t y_r \, \dd \bx_r$ is understood in the rough path sense (see Proposition \ref{prop:intg-ctrld-process} below for further details). On the right-hand side of the same equation, $\int_0^t y_r\ddi x_r$ stands for the Skorohod integral, $y^x$ is defined by \eqref{eq:ctrld-proc-intro}, $R$ is the covariance function alluded to above and $\bd$ represents the Malliavin derivative (notions of Malliavin calculus will be recalled in Section \ref{sec-Mal}). 

Note that our formula \eqref{eq:main-result} unifies the previous cases \eqref{eq:strato-sko-1} and \eqref{eq:strato-sko-rde-case}. Indeed, we have argued that \eqref{eq:strato-sko-rde-case} can be seen as an extension of \eqref{eq:strato-sko-1}. Furthermore, note that the solution $y$ to the rough differential equation \eqref{eq:rde} with a sufficiently regular coefficient function $\sigma(y)$ is a controlled process with $y_s^{x;ij}=(\sigma(y_s))_{ij}$ and  $\bd_sy_t =J_t^x(J_s^x)^{-1} \sigma(y_s)$ for $s\le t$. Therefore it is easy to see that  \eqref{eq:main-result} is an extension of \eqref{eq:strato-sko-rde-case}, which is the main result of \cite{CL}.

Inspired by   \cite{CL,HJT}, our proof of the main result  is  based on the discrete sums method combined with  tools from rough paths theory and Malliavin calculus, 
in which some discrete techniques developed in \cite{LT} are also invoked. We outline the idea as follows. 

 Consider a controlled process $y$ with a decomposition given by \eqref{eq:ctrld-proc-intro},  satisfying some path regularity and Malliavin differentiability conditions.   Let $\pi=\pi^n$ denote the uniform partition of $\ott$ and $\ch$ be the Hilbert space associated to $x$. Denote \begin{equation*}
y^\pi(t)=\sum_{k=0}^{n-1} y_{t_k} \1_{[t_k, t_{k+1}]}(t).
\end{equation*} 
  We first prove that  (see Lemma \ref{lem:con-y})
\begin{equation*}
\lim_{n\to\infty} y^{\pi} = y 
\quad\text{in}\quad
\D^{1,2}(\ch).
\end{equation*}
This enables us to show the convergence of the discrete Skorohod integral $\delta^\diamond(y^\pi)$ to the Skorohod integral $\delta^\diamond(y)=\int_0^T y_{r} \, \ddi x_{r}$ in $L^2(\Omega)$, i.e.,
\begin{equation}\label{eq:sko-sums-intro}
\ist y_{r} \, \ddi x_{r}
=
\lim_{n\to\infty} \sum_{m=0}^{n-1}
\left[ \sum_{i=1}^d y_{t_m}^{i} \di \bx^{\1;i}_{t_{m} t_{m+1}} \right]  \text{ in } L^2(\Omega),
\end{equation}
where we have written $\pi=\{t_0, \ldots, t_n\}$ with $t_m=s+m(t-s)/n$ for $m=0, \ldots, n.$
It is also known from the rough paths theory that the following holds true almost surely:
\begin{equation}\label{eq:strato-sums-intro}
\int_s^t y_r\dd\bx_{r}
=\lim_{|\pi|\to0} \sum_{i=1}^d \sum_{k=0}^{n-1} 
\left(y_{t_k}^i \bx^{\1;i}_{t_k t_{k+1}}+ \frac12\sum_{j=1}^d y_{t_k}^{x; ij} \bx ^{2;ji}_{t_kt_{k+1}}\right),
\end{equation}
where the left-hand side above stands for the rough paths integral of $y$ with respect to $x$. 

The Stratonovich-Skorohod correction terms in \eqref{eq:main-result} now can be obtained by  computing the difference between the right-hand sides in \eqref{eq:sko-sums-intro} and \eqref{eq:strato-sums-intro}. When  computing the difference, one key ingredient will be the forthcoming Proposition \ref{prop:weighted-sum}. This proposition is inspired by the analogous results in \cite{LT} and establishes a general estimate for weighted sums in the second chaos of the Gaussian process $x$.

To end this subsection, we provide some perspectives for future works. On the one hand, as an application, some central limit theorems for Skorohod integrals could be obtained with the help of our main result, generalizing the results in \cite{LT} and \cite{NN}.  On the other hand, noting that in this article the Gaussian rough paths with finite $p$-variation for $p\in(1,3)$ are handled,  we believe that our methodology can be carried out for rougher 
Gaussian paths with $p\ge 3$.  It is also interesting to consider the correction terms for the processes arising from  delay equations (\cite{NNT}), Volterra equations (\cite{DT09, DT11,HT}), etc.

\subsection{Notation}\label{sec:notation}
Let $\pi:0=t_{0}<t_{1}<\cdots<t_{n}=T$ be a partition on $[0,T]$. Take $s,t\in [0,T]$. We write $\ll s,t \rr$ for the discrete interval that consists of   $t_{k} $'s such that   $ t_{k}\in [s,t] $.  We denote by $\cs_{k}([s,t])$ the simplex $\{ (t_{1},\dots, t_{k}) \in [s,t]^{k} ;\, t_{1}\leq \cdots\leq t_{k}  \}$. In contrast, whenever we deal with a discrete interval, we set $\cs_{k}(\ll s,t\rr)=
\{ (t_{1},\dots, t_{k}) \in \ll s,t\rr^{k} ;\, t_{1}< \cdots< t_{k}  \}$. For $t=t_{k}$ we   denote $t-:=t_{k-1}$, $t+:= t_{k+1}$.  We also denote by $\mathcal{D}([s,t])$ the set of all dissections of $[s,t]$.

 For $x=(x^1, \dots, x^n)$ and $y=(y^1,\dots, y^n)$ in $\R^n$, we write write $xy$ for their dot product $x\cdot y=\sum_{i=1}^n x^i y^i$  and write $|x|$ for the Euclid norm $\left(\sum_{i=1}^n x_i^2\right)^{1/2}$.  The $L^p$-norm $(\E[|\xi|^p])^{1/p}$ of a random variable $\xi$ is denoted by $\|\xi\|_p$, for $p\ge1.$
 
 Generally speaking, we will write $C$ for a generic constant whose exact value can change from line to line.

\section{Preliminary material}\label{sec:preliminary-material}

This section contains some basic tools from  rough paths theory and Malliavin calculus, as well as some analytical results, which are crucial for the definition and integration of controlled processes. 

\subsection{Rough path above $\mathbf{x}$}\label{sec:rough-path-above-X}

In this subsection we shall recall the notion of a rough path above a signal $x$, and how this applies to Gaussian signals. The interested reader is referred to \cite{FH,FV-bk,Gu} for further details.

As mentioned in Section \ref{sec:notation}, for $s<t$ and $m\geq 1$, we consider the simplex $\cs_{m}([s,t])=\{(u_{1},\ldots,u_{m})\in\lbrack s,t]^{m};\,u_{1}<\cdots<u_{m}\} $. For notational sake, we just write $\cs_{m}$ for $\cs_{m}(\ott)$. The definition of a rough path above a signal $x$ relies on the following notion of increments.

\begin{definition}\label{def:increments}
Let $k\ge 1$. Then the space of $(k-1)$-increments, denoted by $\cac_k([0,T],\R^d)$ or simply $\cac_k(\R^d)$, is defined as
$$
\cac_k(\R^d)\equiv \lcl g\in C(\cs_{k}; \R^d) ;\, \lim_{t_{i} \to t_{i+1}} g_{t_1 \cdots t_{k}} = 0, \, i\le k-1 \rcl.
$$
\end{definition}

\noindent
We now introduce a finite difference operator called $\delta$, which acts on increments and is useful to split iterated integrals into simpler pieces.
\begin{definition}\label{def:delta-on-C1-C2}
Let $g\in\cac_1(\R^d)$, $h\in\cac_2(\R^d)$. Then for $(s,u,t)\in\cs_{3}$,  we set
\begin{equation*}
  \delta g_{st} = g_t - g_s,
\quad\mbox{ and }\quad
\delta h_{sut} = h_{st}-h_{su}-h_{ut}.
\end{equation*}
\end{definition}

\noindent
The regularity of increments in $\cac_{2}(\R^d)$ will be measured in terms of $p$-variation as follows.
\begin{definition}\label{def:var-norms-on-C2}
For $f \in \cac_2(\R^d)$ and $p>0$, we define
$$
\|f\|_{p-{\rm var}}
=
\|f\|_{p-{\rm var}; [0,T]}
=
\sup_{(t_i)\in\mathcal{D}([0,T])}\left(\sum_{i} |f_{t_{i}t_{i+1}}|^p\right)^{1/p}.
$$
The set of increments in $\cac_{2}(\R^d)$ with finite $p$-variation is denoted by $\cac_{2}^{p{\rm -var}}(\R^d)$.
\end{definition}
\noindent Note that for a continuous  function $g:[0,T]\to \R^d$ with finite  $p$-variation, if we set $\|g\|_{p-{\rm var};[0,T]}=\|\delta g\|_{p-{\rm var};[0,T]}$, then we recover its usual $p$-variation.  

With these preliminary definitions in hand, we can now introduce the notion of a rough path.

\begin{definition}\label{def:RP}
Let $x$ be a continuous $\R^d$-valued  path with finite $p$-variation for some $p\ge 1$. We say that $x$  gives rise to a geometric $p$-rough path if there exists a family
\begin{equation*}
\lcl   
\bx^{n; i_1,\ldots,i_n}_{st} ; \,
(s,t)\in\cs_{2}, \, n\le \lfloor p\rfloor, \, i_1,\ldots,i_n\in\{1,\ldots,d\}
\rcl,
\end{equation*}
such that $\mathbf{x}^{\mathbf{1}}_{st}=\delta x_{st}$ and

\noindent
\emph{(1) Regularity:}
For all $n\le \lfloor p\rfloor$, each component of $\mathbf{x}^{n}$ has finite $\frac{p}{n}$-variation
in the sense of Definition \ref{def:var-norms-on-C2}.

\noindent
\emph{(2) Multiplicativity:}
With $\delta \mathbf{x}^{n}$ as in Definition \ref{def:delta-on-C1-C2}, we have
\begin{equation}\label{eq:multiplicativity}
\delta \bx^{n; i_1,\ldots,i_n}_{sut}=\sum_{n_1=1}^{n-1}
\bx^{n_{1}; i_1,\ldots,i_{n_1}}_{su}  
\bx^{n-n_{1}; i_{n_1+1},\ldots,i_n}_{ut} .
\end{equation}

\noindent
\emph{(3) Geometricity:}
Let  $x^{\ep}$ be a sequence of piecewise smooth approximations of $x$. For any $n\le \lfloor p\rfloor$ and any set of indices  $i_1,\ldots,i_n\in\{1,\ldots,d\}$, we assume that $\bx^{\ep,n; i_1,\ldots,i_n}$ converges in $\frac{p}{n}$-variation to $\bx^{n; i_1,\ldots,i_n}$, where $\bx^{\ep,n; i_1,\ldots,i_n}_{st}$ is defined for $(s,t)\in\cs_{2}$ by
\begin{equation*}
\bx^{\ep,n; i_1,\ldots,i_n}_{st}
=\int_{(u_{1},\ldots,u_{n})\in\cs_{n}([s,t])} dx_{u_{1}}^{\ep,i_{1}} \cdots dx_{u_{n}}^{\ep,i_{n}}.
\end{equation*}
\end{definition}

We are now ready to state one of the main assumptions on our standing process $x$.

\begin{hypothesis}\label{hyp:nth-iterated-intg-x}
Throughout the paper,  $x$ will designate a  continuous $\R^d$-valued  path with finite $p$-variation for $p\ge 1$. We assume that $x$  gives rise to a geometric rough path in the sense of Definition \ref{def:RP}.
\end{hypothesis}

On top of Hypothesis \ref{hyp:nth-iterated-intg-x},  we assume that $x_t=(x_t^1,\ldots,x_t^d)$ is a continuous centered Gaussian process with i.i.d. components, defined on a complete  probability space $(\Omega, \cf, \bp)$. The covariance function of $x$ is given by
\begin{equation}\label{eq:def-covariance-X}
R(s,t):=\be\lc x_{s}^{j} x_{t}^{j}\rc, 
\end{equation}
for any $j\in \{1, \dots, d\}$. 
Throughout the paper, we will also set $R_t:=R(t,t)$.

The information on the path regularity of $x$ is mostly contained in the rectangular increments $R_{uv}^{st}$ of its covariance function $R$, which are defined as
\begin{equation}\label{eq:rect-increment-cov-fct}
R_{uv}^{st} := \be\lc (x_t^j-x_s^j) \, (x_v^j-x_u^j) \rc.
\end{equation}
The regularity of $R$ is expressed thanks to some $2$d-variation type quantities. For sake of clarity we first  recall the definition of the 2d $\rho$-variation.

\begin{definition}\label{def:2d-variation}
Let $\rho\in[1,\infty)$. For a general continuous function $R:[0,T]^{2}\to\R$, its 2d $\rho$-variation is defined as 
\begin{align} 
\|R\|_{\rho-{\rm var};[s,t]\times[u,v]}:
=\sup_{\substack{(t_{i})\in\mathcal{D}([s,t])\\
(t_{j}^{\prime})\in\mathcal{D}\left(\left[u,v\right]\right)
}
}\left(\sum_{t'_{j}}\sum_{t_{i}}\left|R_{t_{i}t_{i+1}}^{t_{j}^{\prime}t_{j+1}^{\prime}}\right|^{\rho}\right)^{\frac{1}{\rho}}.\label{eq:mixed_var}
\end{align}
where 
\begin{equation}\label{eq:R}
R_{t_{i}t_{i+1}}^{t_{j}^{\prime}t_{j+1}^{\prime}}
=
R(t_{i+1},t_{j+1}^{\prime}) - R(t_{i+1},t_{j}^{\prime}) - R(t_{i},t_{j+1}^{\prime}) + R(t_{i},t_{j}^{\prime}).
\end{equation}

\end{definition}

\noindent
Observe that, whenever the function $R$ in Definition \ref{def:2d-variation} is  a covariance function as in \eqref{eq:def-covariance-X},  the rectangular increment $R_{t_{i}t_{i+1}}^{t_{j}^{\prime}t_{j+1}^{\prime}}$ can also be written as in \eqref{eq:rect-increment-cov-fct}. 

In the following definition, we consider each element $((s,t), (u,v))$ in $\cs_2\times \cs_2$ as a rectangle and denote it by $[s,t]\times[u,v]$. 

\begin{definition}\label{def:2d-control}
A continuous function $\omega: \cs_2\times\cs_2\to\R^+$ is called a 2d control,  if it is zero on degenerate rectangles,  and super-additive in the sense that for all rectangles $A, B$ and $C$ contained in $\cs_2$ satisfying  $ A\cup B\subset C$ and $A\cap B=\emptyset$, 
\[\omega(A)+\omega(B)\le \omega(C).\]
\end{definition}

With these elementary notions at hand, we next introduce a hypothesis which allows the use of both rough paths techniques and tools from stochastic analysis for the underlying process $x$. 

\begin{hypothesis}\label{hyp:2d-var-R}
Let $x$ be a $d$-dimensional continuous and centered Gaussian process with i.i.d.\ components, whose  initial value is 0 and covariance $R$ is given by \eqref{eq:def-covariance-X}.  We assume that for some $\rho\in[1,2)$, the function $R$ admits a finite 2d $\rho$-variation.
\end{hypothesis}

 It is well known that for a continuous  function $g:[0,T]\to \R$ with finite  $p$-variation,  the function $[a,b]\mapsto \|g\|^p_{p-{\rm var};[a,b]}$ is a control. However, for a continuous function $R:[0,T]^2\to \R$ with finite 2d $\rho$-variation,  the function $[a,b]\times [c,d]\mapsto \| R \|^{\rho}_{\rho-{\rm var};[a,b]\times[c,d]}$ may fail to be super-additive for $\rho>1$ (see \cite[Theorem 1]{fv11}). To regain this property, here we introduce the so-called \emph{controlled 2d $\rho$-variation} for $1\le \rho<\infty$ (this notion is also introduced in \cite{fv11}). 
\begin{definition}\label{def:controlled-variation}
Let $\rho\in[1,\infty)$.  For a continuous function $R: [0,T]^2\to \R$, its controlled 2d $\rho$-variation is defined as 
\[\vvvert R \vvvert_{\rho-{\rm var};[s,t]\times [u,v]}:=\sup_{\Pi\in \mathcal P([s,t]\times [u,v])} \left(\sum_{[t_i,t_{i+1}]\times[t'_j,t'_{j+1}] \in \Pi} \left|R_{t_i t_{i+1}}^{t'_j t'_{j+1}}\right|^{\rho}\right)^{\frac1\rho}, \]
where $R_{t_i t_{i+1}}^{t'_j t'_{j+1}}$ is given in \eqref{eq:R}, $\Pi$ is a partition of $[s, t]\times [u,v]$ which is a finite set of essentially disjoint rectangles whose union is $[s,t]\times [u,v]$, and $\mathcal P([s,t]\times [u,v])$ is the collection of all such partitions. 
\end{definition}

The norms $\|\cdot\|$ and $\vvvert\cdot\vvvert$ are  comparable thanks to the following property borrowed from \cite[Theorem 1]{fv11}: for all $\rho'>\rho$ there exists a constant $C_{\rho, \rho'}$ such that
\begin{equation}\label{e:norms-compare}
C_{\rho,\rho'} \vvvert f\vvvert_{\rho'-{\rm var}; [s_1, s_2]\times[t_1, t_2]} \le \|f\|_{\rho-{\rm var}; [s_1, s_2]\times[t_1, t_2]}\le \vvvert f\vvvert_{\rho-{\rm var}; [s_1, s_2]\times[t_1, t_2]}. 
\end{equation} 
 Moreover,  the function $[a,b]\times [c,d] \mapsto \vvvert f \vvvert^\rho_{\rho-{\rm var};[a,b]\times [c,d]}$ is a 2d control (\cite[Theorem 1]{fv11}).

\begin{remark} \label{remark:control-2d-variation}
 Owing to \eqref{e:norms-compare}, any continuous function $R:[0,T]^2\to \R$ with finite \emph{2d $\rho$-variation} also has a finite \emph{controlled 2d  $\rho'$-variation} for all $\rho'>\rho.$ Furthermore,   for all $((s,t), (u,v))\in \cs_2\times \cs_2$,  
\[\|R\|^{\rho'}_{\rho'-{{\rm var}};[s,t]\times[u,v]}\le \omega([s,t]\times[u,v]),\]
where $\omega$ is the 2d control  (as introduced in Definition \ref{def:2d-control}) given by 
\begin{equation}\label{e:w}
\omega([s,t]\times[u,v])= \vvvert R\vvvert^{\rho'}_{{\rho'}-{{\rm var}};[s,t]\times[u,v]}.
\end{equation}
\end{remark}
\begin{remark}
As an example, if the Gaussian process $x$ is a fractional Brownian motion with Hurst parameter $H\in (0, \frac12]$, the covariance $R$ of $x$ has finite 2d $\rho$-variation with $\rho=\frac1{2H}$ and Hypothesis \ref{hyp:2d-var-R} is satisfied (see \cite[Example 1]{fv11}).  If we choose $\rho'>\rho=\frac{1}{2H}$, then the quantity $\|R\|^{\rho'}_{\rho'-{{\rm var}};[s,t]\times[u,v]} $ is controlled by the 2d control $ \vvvert R\vvvert^{\rho'}_{\rho'-{{\rm var}};[s,t]\times[u,v]}$. Note that $ \vvvert R\vvvert_{\rho-{{\rm var}};[0,T]^2}=\infty$ if we choose $\rho\le\frac1{2H}$ (as shown in~\cite[Example 2]{fv11}).
\end{remark}

In the sequel we will also request the function $t\mapsto R(t,t)$ to be H\"older continuous. We now state an additional assumption which guarantees this H\"older continuity (see e.g \cite{CL,GOT} for a similar hypothesis).

\begin{hypothesis}\label{hyp:var-R}
 Let $\rho\in[1,2)$ be given in Hypothesis \ref{hyp:2d-var-R}.   We assume that there exists $C<\infty$ such that for all $s,t\in[0,T]$ the covariance function $R$ satisfies
\begin{equation}\label{eq:R-var}
\|R(t,\cdot)-R(s,\cdot)\|^\rho_{\rho-{\rm{var;}}[0,T]}\le C|t-s|.
\end{equation}
\end{hypothesis}

\begin{remark}\label{remark:holder-R}
A direct consequence of Hypothesis \ref{hyp:var-R} is that $R_t:=R(t,t)$ has   finite $\rho$-variation, by   \cite[Lemma 2.14]{CL}.   Moreover,  recall that by Hypothesis \ref{hyp:2d-var-R}, we have $x_0=0$ and hence $R(0, \cdot)=R(\cdot, 0)\equiv 0$. This together with \eqref{eq:R-var}  implies that $R(t,\cdot)$ and $R(\cdot, t)$ have finite $\rho$-variation for each fixed $t\in[0,T].$
\end{remark}

 \begin{remark}\label{remark:holder-R'}
Given $\rho\in[1,2)$, clearly we have, for $0\le s_1\le s_2\le T$ and $0\le t_1\le t_2\le T$,   
 \[\|R\|_{\rho-\text{var;}[s_1,s_2]\times[t_1,t_2]}^{2} \le \|R\|_{\rho-\text{var;}[s_1,s_2]\times[0,T]}\|R\|_{\rho-\text{var;}[0,T]\times[t_1,t_2]}.\]
Furthermore, it is a direct consequence of \eqref{eq:R-var} that for $0\le s\le t\le T$,
 \begin{equation*}
 \|R\|^\rho_{\rho-\text{var;}[s,t]\times[0,T]}\le C(t-s).
 \end{equation*}
Combining the two inequalities above, we have the following  control on the 2d $\rho$-variation of $R$: for some positive constant $C$, 
\begin{equation}\label{e:bound-R-2rho}
\|R\|_{\rho-\text{var;}[s_1,s_2]\times[t_1, t_2]}^{2\rho} \le C (s_2-s_1)(t_2- t_1).
\end{equation}
\end{remark}

\begin{remark}\label{remark:holder-R''}
Note that for $1\le \gamma\le \gamma'<\infty, \|R\|_{\gamma'-\text{var;}[s,t]\times[u,v]}\le \|R\|_{\gamma-\text{var;}[s,t]\times[u,v]}$. Therefore, under Hypothesis \ref{hyp:var-R},  inequality \eqref{eq:R-var} and hence \eqref{e:bound-R-2rho} hold with $\rho$ replaced by $\rho'\in(\rho, 2)$ and $C$ depending on $(\rho, \rho', T)$.
\end{remark}

 \begin{remark}\label{remark2.10}
Clearly \eqref{e:bound-R-2rho} yields the following relations on squares of the form $[s,t]^2$,
\begin{equation}\label{e:Holder-control-var-R}
\|R\|_{\rho-\text{var;}[s,t]^2}^{\rho} \le C (t-s).
\end{equation}
We say that $R$ has  finite \emph{H\"{o}lder-controlled  2d $\rho$-variation} if $R$ satisfies both Hypothesis \ref{hyp:2d-var-R} and~\eqref{e:Holder-control-var-R}.  An important consequence of $R$ having finite H\"older controlled 2d $\rho$-variation is that $x$ has $1/p$-H\"{o}lder continuous sample paths for every $p>2\rho$. It is also readily checked that, whenever $x$ satisfies \eqref{e:Holder-control-var-R}, we have
\begin{equation}\label{eq:bound-increment-X-L2}
\be\lc \lp \bx_{st}^{\1;i} \rp^{2}  \rc
\le
c\, (t-s)^{\frac{1}{\rho}}.
\end{equation}
\end{remark}

\begin{remark}
Similarly to the argument in \cite[Remark 2.4]{CHLT}, for any process $x$  whose covariance function $R$ admits a finite $\rho$-variation one can introduce a deterministic time-change $\tau:[0,T]\to[0,T]$ such that $\tilde{X}=X\circ\tau$ has finite H\"{o}lder-controlled  2d $\rho$-variation.  That is the time changed process $\tilde X$ satisfies Hypothesis \ref{hyp:2d-var-R}  and equation \eqref{e:Holder-control-var-R}.
\end{remark}


 The following result (stated e.g. in \cite[Theorem 15.33]{FV-bk}) relates the 2d $\rho$-variation of $R$ with the pathwise assumptions allowing to apply the abstract rough paths theory.

\begin{proposition}\label{prop:Gaussian-rough-path}
Let $x=(x^1,\ldots,x^d)$ be a continuous centered Gaussian process with  i.i.d.\ components and covariance function $R$ defined by \eqref{eq:def-covariance-X}. If $R$ satisfies Hypothesis \ref{hyp:2d-var-R}, then $x$ also satisfies Hypothesis \ref{hyp:nth-iterated-intg-x}  provided $p>2\rho$. 
\end{proposition}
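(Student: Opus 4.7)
The plan is to follow the by-now-standard Friz--Victoir lift of a Gaussian process whose covariance has finite 2d $\rho$-variation into a geometric rough path. Since Hypothesis \ref{hyp:2d-var-R} forces $\rho \in [1,2)$ and the conclusion requires $p > 2\rho$, the levels to build are $n \leq \lfloor p \rfloor$; in the paper's main range $p \in (2,3)$ this means constructing $\bx^{\2}$ and checking the three items of Definition \ref{def:RP}. I sketch the argument for the second level, the higher levels (needed when $p \geq 3$) being handled by the same scheme with $n$-fold Young integrals.

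First I would fix a sequence $x^\epsilon$ of piecewise-linear approximations of $x$ along partitions whose mesh tends to $0$, and form the classical double integrals $\bx^{\epsilon, 2; ij}_{st}$ as in Definition \ref{def:RP}(3). For $i \neq j$, independence of the components places $\bx^{\epsilon, 2; ij}_{st}$ in the second Wiener chaos of $x$; on the diagonal $i = j$ one uses $\bx^{\epsilon, 2; ii}_{st} = \tfrac{1}{2}(\delta x^{\epsilon, i}_{st})^{2}$, so that after subtracting the expectation one again lands in the second chaos. The quantitative heart of the argument is the 2D Young-type bound
\begin{equation*}
\be\bigl[|\bx^{\epsilon, 2; ij}_{st}|^{2}\bigr] \leq C_\rho \, \|R^\epsilon\|_{\rho-\text{var};[s,t]^{2}}^{2},
\end{equation*}
obtained by expanding $\be[\bx^{\epsilon, 2; ij}_{st}\,\bx^{\epsilon, 2; ij}_{s't'}]$ as a 2D Riemann--Stieltjes integral against the product covariance of $x^i$ and $x^j$, and invoking the 2D Young integral, available precisely because $\rho<2$. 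An analogous estimate bounds $\|\bx^{\epsilon, 2; ij}_{st} - \bx^{\epsilon', 2; ij}_{st}\|_{L^2}$ in terms of $\|R^\epsilon - R^{\epsilon'}\|_{\rho-\text{var};[s,t]^{2}}$, so that $\{\bx^{\epsilon, 2}\}$ is Cauchy in $L^2$ and extracts a candidate limit $\bx^{\2}$.

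Next, equivalence of moments on Wiener chaos upgrades the $L^2$ bounds to $L^q$ for every $q<\infty$. Combining this with the 2d control $\omega$ from Remark \ref{remark:control-2d-variation} (available for any $\rho'\in(\rho,2)$) and a Kolmogorov-type chaining argument applied to $\bx^{\2}$ as a two-parameter increment process, one obtains a modification with finite $(p/2)$-variation almost surely for every $p>2\rho'$, and hence for every $p>2\rho$ by letting $\rho'\downarrow \rho$. Chen's relation \eqref{eq:multiplicativity} is an algebraic identity for each smooth $\bx^{\epsilon}$ which survives the $L^2$ limit, and geometricity is automatic because $\bx^{\2}$ was built precisely as the limit, in the required $\frac{p}{2}$-variation topology, of lifts of the smooth paths $x^\epsilon$.

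The main obstacle throughout is the 2D Young estimate at the second level: it is sharp in $\rho$ and breaks down at $\rho=2$, which is exactly why Hypothesis \ref{hyp:2d-var-R} demands $\rho\in[1,2)$ and why the critical exponent $p=2\rho$ appears in the conclusion. For $p\geq 3$ one repeats the scheme for the third-level iterate using a 3D Young integral bound against iterated differences of $R$, whose validity again rests on $\rho<2$ and whose verification is substantially more involved than at level two (this is the step where the full strength of the Friz--Victoir machinery in \cite[Thm.~15.33]{FV-bk} is used).
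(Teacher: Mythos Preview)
Your sketch is a reasonable outline of the Friz--Victoir construction, but note that the paper does not actually prove this proposition: it is stated as a known result with a direct reference to \cite[Theorem~15.33]{FV-bk}, with no argument given. Your proposal is therefore not competing with any proof in the paper --- you are simply rehearsing the standard machinery behind the cited theorem, which is consistent with what the paper invokes.
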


Proposition \ref{prop:Gaussian-rough-path} asserts that under Hypothesis \ref{hyp:2d-var-R}, the Gaussian process $x$ is amenable to rough path analysis. In particular, a rough path integral with respect to $x$ can be constructed. In this context, the natural class of integrand one might want to consider is the family of controlled processes. Its definition is recalled below. 

\begin{definition}\label{def:ctrld-process}
Consider a continuous $\R^d$-valued  path $x$ with finite $p$-variation for some $p\ge1$. We say that a continuous $\R^d$-valued  path $y$ of finite $p$-variation is controlled by $x$, if there exist a continuous $\R^{d^2}$-valued path $y^x$ of finite $p$-variation  and  a $1$-increment process $r\in \cac_{2}^{\frac p2{\rm -var}}(\R^d)$ as defined in Definition \ref{def:var-norms-on-C2},  such that 
\begin{equation}\label{eq:ctrld-proc-def}
\delta y_{st}^{i}
= \sum_{j=1}^dy_{s}^{x;ij} \, \bx_{st}^{\1;j} +  r_{st}^{i}, ~\text{ for }  i=1, \dots, d.
\end{equation}
\end{definition}

We are now ready to state the basic integration result for controlled processes, which can be found e.g. in \cite{FH, FV-bk, Gu}. 
\begin{proposition}\label{prop:intg-ctrld-process}
Let $T>0$ be fixed. Let $\bx$ be a geometric $p$-rough path lifted from a continuous $\R^d$-valued path with finite $p$-variation for some $p\in[1, 3),$ and let $y$ be a continuous $\R^d$-valued path of finite $p$-variation that is controlled by $x$ in the sense of Definition \ref{def:ctrld-process}. Then  for $0\le s< t\le T$, one can define the integral $
\int_s^t y_r \dd \bx_r$ as the limit of  the following Riemann sums,
\begin{equation}\label{eq:int-sum}
\int_s^t y_r d\bx_r=\lim_{|\pi_n|\to0} \sum_{k=0}^{n-1} \left(  \sum_{i=1}^d y^i_{t_k}\bx_{t_kt_{k+1}}^{1,i}+\sum_{i=1}^d\sum_{j=1}^d y_{t_k}^{x;ij}\bx_{t_kt_{k+1}}^{2;ij}\right),
\end{equation}
where  $\pi_n=[s=t_0<t_1<\dots<t_n=t]$ is a partition of $[s,t]$ and $|\pi_n|=\max\limits_{k\in\{0,\ldots, n-1\}}|t_{k+1}-t_k|.$ In \eqref{eq:int-sum}, observe that we have also used the convention on inner products put forward in Section \ref{sec:notation}. Moreover, there exists a constant $C=C(T,p)$ depending only on $(T,p)$ such that for all $0\le s<t\le T$ we have
\[\left|\int_s^t y_r d\bx_r -y_s \bx^1_{st}- \sum_{i=1}^d \sum_{j=1}^d y_{s}^{x;ij} \bx_{st}^{2;ij}\right| \le C\left(\|\bx^1\|_{p-{\rm var}}\|r\|_{\frac p2-{\rm var}}+\|\bx^2\|_{\frac p2-{\rm var}}\|y^x\|_{p-{\rm var}} \right)|t-s|^{3/p},
\]
where we recall that $r$ is the increment introduced in \eqref{eq:ctrld-proc-def}.
\end{proposition}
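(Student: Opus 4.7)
The plan is to invoke the Sewing Lemma (see e.g.\ \cite{FV-bk,Gu}). For $(s,t)\in\cs_{2}$ I would introduce the local approximation
\[
\Xi_{st} := \sum_{i=1}^d y_s^{i}\,\bx_{st}^{\1;i} + \sum_{i,j=1}^d y_s^{x;ij}\,\bx_{st}^{\2;ij},
\]
which is precisely the summand appearing in the Riemann sum \eqref{eq:int-sum}. The candidate integral $\int_s^t y_r\,\dd\bx_r$ is then defined as the sewing of $\Xi$, namely the limit of $\sum_k \Xi_{t_k t_{k+1}}$ along refining dissections $\pi$ of $[s,t]$.

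The heart of the argument is to verify that $\Xi$ is sewable. Expanding $\delta\Xi_{sut}=\Xi_{st}-\Xi_{su}-\Xi_{ut}$ by means of the multiplicativity relation \eqref{eq:multiplicativity} together with the controlled decomposition \eqref{eq:ctrld-proc-def}, the ``second-order'' pieces of the form $\bx_{su}^{\1}\otimes\bx_{ut}^{\1}$ generated by $\delta y_{su}$ and by the splitting of $\bx_{st}^{\2}$ compensate each other, leaving only two genuinely higher-order residuals: one of the type $r_{su}\,\bx_{ut}^{\1}$ and one of the type $(\delta y_{su}^{x})\,\bx_{ut}^{\2}$. Using that $\bx^{\1}$ and $y^x$ have finite $p$-variation while $\bx^{\2}$ and $r$ have finite $\frac{p}{2}$-variation, each residual on $[s,t]$ is bounded by a product of two variation norms, and a standard super-additivity argument produces a single control $\omega$ on the simplex dominated by $\|\bx^{\1}\|_{\pv}\|r\|_{\frac{p}{2}\text{-}\mathrm{var}}+\|\bx^{\2}\|_{\frac{p}{2}\text{-}\mathrm{var}}\|y^x\|_{\pv}$, so that $|\delta\Xi_{sut}|\le C\,\omega(s,t)^{3/p}$. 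Since $p<3$ we have $3/p>1$, and the Sewing Lemma yields simultaneously the existence of the limit \eqref{eq:int-sum} along arbitrary dissections of $[s,t]$ and the abstract error bound
\[
\Big|\int_s^t y_r\,\dd\bx_r - \Xi_{st}\Big|\le C\,\omega(s,t)^{3/p},
\]
which unpacks into the factorised estimate stated in the proposition.

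The main technical wrinkle, more bookkeeping than conceptual, is the careful construction of the control $\omega$ that simultaneously dominates both residuals and the translation of the abstract bound $\omega(s,t)^{3/p}$ into the explicit Hölder-type factor $|t-s|^{3/p}$ used in the statement. All remaining points — associativity of the sewing, independence from the chosen sequence of partitions, and the continuity of the integration map $(y,y^x)\mapsto\int y\,\dd\bx$ — then follow directly from the standard form of the Sewing Lemma.
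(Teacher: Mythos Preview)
The paper does not actually prove this proposition: it is stated as a basic integration result ``which can be found e.g.\ in \cite{FH, FV-bk, Gu}'' and then used without further argument. Your Sewing Lemma argument is precisely the standard proof contained in those references (in particular \cite{Gu} and \cite[Chapter~4]{FH}), so there is nothing to compare and your outline is correct.

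One comment on the wrinkle you flag. The Sewing Lemma in $p$-variation form delivers a bound $|\int_s^t y\,\dd\bx - \Xi_{st}|\le C\,\omega(s,t)^{3/p}$ for a control $\omega$ built from the $p$- and $\frac{p}{2}$-variation norms restricted to $[s,t]$. Under the pure $p$-variation hypotheses of the proposition this does \emph{not} in general collapse to the factor $|t-s|^{3/p}$ appearing on the right-hand side of the stated inequality; that replacement is legitimate only once one has H\"older-type control, as the paper secures later via Remark~\ref{remark2.10} (H\"older-controlled $2$d $\rho$-variation). So your instinct that this translation is a genuine step rather than mere bookkeeping is well founded: in the bare $p$-variation setting the honest conclusion is the control bound, and the $|t-s|^{3/p}$ version should be read as the specialisation used downstream in the paper.
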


 Recall that our main objective is to compute some Skorohod-Stratonovich corrections as in \cite{CL}. To this aim we will need a more detailed description of the increments of $y$ than the ones given in \eqref{eq:ctrld-proc-def}. Namely we will assume that $y$ is a second order controlled process as defined below (for the definition of  controlled processes of general order, we refer to \cite[Definition 4.17]{FH} or \cite[Definition 5.1]{CHLT}).

\begin{definition}\label{def:ctrld-process'}
Consider a continuous $\R^d$-valued  path $x$ with finite $p$-variation for some $p\ge1$. We say that a continuous $\R^d$-valued  path $y$ of finite $p$-variation is a second-order controlled process with respect to $x$, if there exist a continuous $\R^{d^2}$-valued path $y^x$, a continuous $\R^{d^3}$-valued path $y^{xx}$, both of which are of finite $p$-variation,  and $1$-increment processes $r\in \cac_{2}^{\frac p3{\rm -var}}(\R^d), r^x \in \cac_{2}^{\frac p2{\rm -var}}(\R^{d^2})$ as defined in Definition \ref{def:var-norms-on-C2},  such that  for  $i=1,\dots, d$ and $(s,t)\in \cs_2([0,T])$ we have
\begin{equation}\label{e:y-1}
\delta y_{st}^{i}
= \sum_{j=1}^dy_{s}^{x;ij} \, \bx_{st}^{\1;j} +  \sum_{j,k=1}^dy_{s}^{xx;ijk} \, \bx_{st}^{\2;jk}+ r_{st}^{i}. 
\end{equation}
 In addition, the increment $y^x$ in \eqref{e:y-1} is a controlled process of order 1, that is for $i, j=1, \dots, d$ and $(s,t)\in \cs_2([0,T])$ we have
\begin{equation}\label{e:y-2}
\delta y_{st}^{x;ij}
= \sum_{k=1}^dy_{s}^{xx;ijk} \, \bx_{st}^{\1;k} +  r_{st}^{x;ij}.
\end{equation}
\end{definition}

\subsection{Higher dimensional Young integrals}
In this subsection, we gather some inequalities for Young integrals in $\R^n$ which will feature in our computations throughout the paper. We start by a relation for integrals in the plane borrowed form \cite{fv10, Tow02}. 

\begin{theorem}\label{2d-Young}
Let $f, R:[0,T]^2\to \R$ be continuous functions  with finite $p$-variation and finite $q$-variation respectively for $\frac1p+\frac1q>1$. Specifically recalling our Definition \ref{def:2d-variation}, we assume $\|f\|_{p-{\rm var};[0,T]^2}<\infty$ and $\|R\|_{q-{\rm var};[0,T]^2}<\infty$. Moreover,  assume that for all $s_1,s_2\in[0,T]$, both $f(s_1,\cdot)$ and $f(\cdot, s_2)$ have finite 1-dimensional $p$-variation as given in Definition~\ref{def:var-norms-on-C2}. Then the $2$d Young-Stieltjes integral of $f$ with respect to $R$  exists and the following Young's inequality holds, for $[\lbar{s}_1,\bar{s}_1]\times[\lbar{s}_2,\bar{s}_2]\subset[0,T]^2$,
\begin{multline}\label{eq:bound-2d}
\left|\int_{[\lbar{s}_1,\bar{s}_1]\times[\lbar{s}_2,\bar{s}_2]} f(s_1,s_2) dR(s_1,s_2)\right|
\le C_{p,q}\Bigg(|f(\lbar{s}_1,\lbar{s}_2)|+\|f(\lbar{s}_1,\cdot)\|_{\pv;[\lbar{s}_2,\bar{s}_2]}
\\
+\|f(\cdot, \lbar{s}_2)\|_{\pv; [\lbar{s}_1,\bar{s}_1]}
+\|f\|_{\pv;[\lbar{s}_1,\bar{s}_1]\times[\lbar{s}_2,\bar{s}_2]}\Bigg)  \| R\|_{\qv;[\lbar{s}_1,\bar{s}_1]\times[\lbar{s}_2,\bar{s}_2]}\, .
\end{multline}
\end{theorem}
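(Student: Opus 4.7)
The plan is to construct the integral as a limit of corner-point Riemann sums over nested refinements, and to obtain the bound by iterating a two-dimensional Love--Young inequality in the spirit of \cite{Tow02, fv10}. For dissections $\pi^{(1)}=\{\lbar{s}_1=u_0<\cdots<u_n=\bar{s}_1\}$ and $\pi^{(2)}=\{\lbar{s}_2=v_0<\cdots<v_m=\bar{s}_2\}$, I would set
$$
S(\pi^{(1)},\pi^{(2)}) = \sum_{i=0}^{n-1}\sum_{j=0}^{m-1} f(u_i,v_j)\, R_{u_iu_{i+1}}^{v_jv_{j+1}}.
$$
The trivial partition yields $S = f(\lbar{s}_1,\lbar{s}_2)\, R_{\lbar{s}_1\bar{s}_1}^{\lbar{s}_2\bar{s}_2}$, whose modulus is bounded by the first summand on the right-hand side of \eqref{eq:bound-2d}; every further contribution must be produced by refinement errors.

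The core step is a refinement estimate. Inserting a single node $u^\star\in(u_k,u_{k+1})$ into $\pi^{(1)}$ changes $S$ by
$$
\Delta^{(1)}_k = \sum_{j=0}^{m-1}\bigl(f(u^\star,v_j)-f(u_k,v_j)\bigr)\, R_{u^\star u_{k+1}}^{v_jv_{j+1}}.
$$
The decomposition $f(u^\star,v_j)-f(u_k,v_j) = [f(u^\star,v_j)-f(u_k,v_j)-f(u^\star,\lbar{s}_2)+f(u_k,\lbar{s}_2)] + [f(u^\star,\lbar{s}_2)-f(u_k,\lbar{s}_2)]$ splits the increment into a rectangular increment of $f$ over $[u_k,u^\star]\times[\lbar{s}_2,v_j]$ plus a purely one-dimensional increment along the bottom edge. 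A 1d Love--Young bound on the sum over $j$, combined with the super-additivity of $\|R\|_{\qv}^{q}$ and of the corresponding $p$-variation controls of $f$, then gives
$$
|\Delta^{(1)}_k| \le C\Bigl(\|f\|_{\pv;[u_k,u^\star]\times[\lbar{s}_2,\bar{s}_2]}+\|f(\cdot,\lbar{s}_2)\|_{\pv;[u_k,u^\star]}\Bigr)\|R\|_{\qv;[u^\star,u_{k+1}]\times[\lbar{s}_2,\bar{s}_2]}.
$$
A symmetric argument for refinements of $\pi^{(2)}$ produces an analogous bound with the roles of the two variables exchanged and with $\|f(\lbar{s}_1,\cdot)\|_{\pv}$ playing the role of the boundary term. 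A standard pigeonhole argument (remove the point carrying the smallest load, using $1/p+1/q>1$ to sum a convergent geometric series) then forces convergence of $S$ as $|\pi^{(1)}|\vee|\pi^{(2)}|\to 0$ and yields a telescoping quantitative estimate.

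To arrive at \eqref{eq:bound-2d} in its stated form, I would pass from the trivial partition to a generic one in three successive stages: first refine along the bottom edge $v=\lbar{s}_2$, which contributes $\|f(\cdot,\lbar{s}_2)\|_{\pv}\,\|R\|_{\qv}$; then refine along the left edge $u=\lbar{s}_1$, giving $\|f(\lbar{s}_1,\cdot)\|_{\pv}\,\|R\|_{\qv}$; and finally fill in the interior nodes, which activates the genuinely bivariate term $\|f\|_{\pv}\,\|R\|_{\qv}$. Super-additivity of $\|R\|_{\qv}^{q}$ as a 2d control lets all three stages share the single global $\|R\|_{\qv}$ factor.

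The main obstacle is the 2d Love--Young refinement step itself. Unlike the 1d case, bounding $f(u^\star,v_j)-f(u_k,v_j)$ uniformly in $v_j$ is not automatic from the rectangular $p$-variation alone: one must subtract a reference value along a boundary line to convert that difference into a true rectangular increment, and the subtracted piece is exactly what generates the one-dimensional summands $\|f(\lbar{s}_1,\cdot)\|_{\pv}$ and $\|f(\cdot,\lbar{s}_2)\|_{\pv}$ in \eqref{eq:bound-2d}. Choosing the correct reference line at each refinement stage is the delicate point; once that bookkeeping is set up, the remainder of the argument is a mechanical iteration of the 1d pigeonhole recursion.
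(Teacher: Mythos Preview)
The paper does not supply its own proof of this theorem: the sentence introducing Theorem~\ref{2d-Young} says the result is ``borrowed from \cite{fv10, Tow02},'' and the paper only gives a full argument for the downstream Lemma~\ref{4d-Young}, which \emph{uses} Theorem~\ref{2d-Young} as a black box. Your outline---corner-point Riemann sums, point insertion/removal with a pigeonhole choice, and the key decomposition of $f(u^\star,v_j)-f(u_k,v_j)$ into a rectangular increment over $[u_k,u^\star]\times[\lbar{s}_2,v_j]$ plus a boundary increment along $v=\lbar{s}_2$---is exactly the mechanism of \cite{Tow02} and \cite{fv10}, so there is no alternative proof in the paper to compare against; your sketch is simply a faithful summary of the cited literature.

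One point does deserve care. You invoke ``super-additivity of $\|R\|_{\qv}^{q}$ as a 2d control'' to let all refinement stages share a single global $\|R\|_{\qv}$ factor. The paper itself warns (see the discussion around Definition~\ref{def:controlled-variation} and relation~\eqref{e:norms-compare}, citing \cite[Theorem~1]{fv11}) that the grid-based 2d $q$-variation of Definition~\ref{def:2d-variation} is \emph{not} in general super-additive for $q>1$; only the controlled variation $\vvvert\cdot\vvvert_{\qv}$ of Definition~\ref{def:controlled-variation} is. In the proofs of \cite{Tow02, fv10} this difficulty is avoided because the pigeonhole is run one coordinate at a time: when you remove a grid line $u_k$, the competing strips $[u_{k-1},u_{k+1}]\times[\lbar{s}_2,\bar{s}_2]$ are compared via quantities that are super-additive in the single variable $u$, and similarly for the $v$-refinement. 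Your three-stage scheme (bottom edge, left edge, interior) is the right structure for this, but when you write it out you should make explicit that you never rely on genuine 2d super-additivity of $\|\cdot\|_{\qv}^{q}$.
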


We now state a lemma about integration in $\R^4$ which will be invoked in order to analyze discretization properties for the Malliavin derivative of a controlled process $y$. Although its proof might be traced back to \cite{Tow02}, we include it here for the sake of clarity since Lemma~\ref{4d-Young} is tailored for our specific needs.

\begin{lemma}\label{4d-Young} Let $f,g,R$ be continuous functions defined on $[0,T]^2$. Similarly to Theorem~\ref{2d-Young}, we assume that $f,g$ have finite $p$-variation, as well as $f(s_1,\cdot), f(\cdot, s_3), g(s_2, \cdot)$ and $g(\cdot, s_4)$ for fixed arbitrary $s_1,s_2,s_3, s_4\in[0,T]$. We also suppose that $R$ has finite $q$-variation on $[0,T]^2$, with $p,q$ satisfying $\frac1p+\frac1q>1$. Then for $\lbar{s}_1, \bar{s}_1, \ldots, \lbar{s}_4, \bar{s}_4\in[0,T]$ such that $\lbar{s}_j< \bar{s}_j$ for $j=1,\dots,4$, the following Young integral in $\R^4$ is well defined: 
\[
I^{f,g,R}(\lbar{s}_1, \bar{s}_1, \ldots, \lbar{s}_4, \bar{s}_4):=\int_{[\lbar{s}_1, \bar{s}_1]\times[\lbar{s}_2, \bar{s}_2]\times[\lbar{s}_3, \bar{s}_3]\times[\lbar{s}_4, \bar{s}_4]} f(s_1,s_3) g(s_2,s_4) dR(s_1,s_2) \dd R(s_3,s_4).
\]
Moreover, $I^{f,g,R}(\lbar{s}_1, \bar{s}_1, \ldots, \lbar{s}_4, \bar{s}_4)$ can be bounded as 
\begin{align}
&\left|I^{f,g,R}(\lbar{s}_1, \bar{s}_1, \ldots, \lbar{s}_4, \bar{s}_4)\right|
\le  C_{p,q}~ \|R\|_{\qv;[\lbar{s}_1,\bar{s}_1]\times[\lbar{s}_2,\bar{s}_2]} \|R\|_{\qv;[\lbar{s}_3,\bar{s}_3]\times[\lbar{s}_4,\bar{s}_4]} \notag\\
& \quad \times\Big(|f(\lbar{s}_1,\lbar{s}_3)|+\|f(\cdot,\lbar{s}_3)\|_{\pv;[\lbar{s}_1,\bar{s}_1]}+\|f(\lbar{s}_1,\cdot)\|_{\pv;[\lbar{s}_3,\bar{s}_3]}+\|f\|_{\pv;[\lbar{s}_1,\bar{s}_1]\times[\lbar{s}_3,\bar{s}_3]}\Big)\notag\\
& \quad \times\Big(|g(\lbar{s}_2,\lbar{s}_4)|+\|g(\cdot,\lbar{s}_4)\|_{\pv;[\lbar{s}_2,\bar{s}_2]}+\|g(\lbar{s}_2,\cdot)\|_{\pv;[\lbar{s}_4,\bar{s}_4]}+\|g\|_{\pv;[\lbar{s}_2,\bar{s}_2]\times[\lbar{s}_4,\bar{s}_4]}\Big)\,. \label{eq:bound-4d}
\end{align}
\end{lemma}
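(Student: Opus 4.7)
The plan is to view $I^{f,g,R}(\lbar{s}_1, \bar{s}_1, \ldots, \lbar{s}_4, \bar{s}_4)$ as an iterated two-dimensional Young integral and to reduce the bound to two successive applications of Theorem~\ref{2d-Young}, exploiting at each step that the integrand is a tensor product. Specifically, for each fixed $(s_3, s_4) \in [\lbar{s}_3, \bar{s}_3] \times [\lbar{s}_4, \bar{s}_4]$ I would first consider the inner integral
\[
F(s_3, s_4) := \int_{[\lbar{s}_1, \bar{s}_1] \times [\lbar{s}_2, \bar{s}_2]} f(s_1, s_3) \, g(s_2, s_4) \, \dd R(s_1, s_2),
\]
so that $I^{f,g,R} = \int_{[\lbar{s}_3, \bar{s}_3] \times [\lbar{s}_4, \bar{s}_4]} F(s_3, s_4) \, \dd R(s_3, s_4)$. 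The crucial observation is that the inner integrand $(s_1, s_2) \mapsto f(s_1, s_3) g(s_2, s_4)$ is a tensor product $\phi \otimes \psi$ with $\phi = f(\cdot, s_3)$ and $\psi = g(\cdot, s_4)$; a short computation based on $(\phi \otimes \psi)_{t_i t_{i+1}}^{t'_j t'_{j+1}} = [\phi(t_{i+1}) - \phi(t_i)][\psi(t'_{j+1}) - \psi(t'_j)]$ shows that the 2d $p$-variation and the edge 1d $p$-variations of $\phi \otimes \psi$ factor as products of the corresponding 1d norms of $\phi$ and $\psi$, all of which are finite under the standing hypotheses.

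Applying Theorem~\ref{2d-Young} to $F(s_3, s_4)$ therefore yields
\[
|F(s_3, s_4)| \le C_{p,q} \, \|R\|_{\qv;[\lbar{s}_1,\bar{s}_1]\times[\lbar{s}_2,\bar{s}_2]} \cdot A_f(s_3) \cdot A_g(s_4),
\]
where $A_f(s_3) = |f(\lbar{s}_1, s_3)| + \|f(\cdot, s_3)\|_{\pv;[\lbar{s}_1, \bar{s}_1]}$ and $A_g(s_4)$ is defined analogously with $g$ and $\lbar{s}_2$. Applying the same reasoning to a 2d increment $F_{u_k u_{k+1}}^{u'_l u'_{l+1}}$, which is itself the 2d Young integral of the tensor product $(s_1, s_2) \mapsto [f(s_1, u_{k+1}) - f(s_1, u_k)][g(s_2, u'_{l+1}) - g(s_2, u'_l)]$ against $\dd R(s_1, s_2)$, gives
\[
|F_{u_k u_{k+1}}^{u'_l u'_{l+1}}| \le C_{p,q} \, \|R\|_{\qv;[\lbar{s}_1,\bar{s}_1]\times[\lbar{s}_2,\bar{s}_2]} \cdot \alpha_k \cdot \beta_l,
\]
with $\alpha_k = |f(\lbar{s}_1, u_{k+1}) - f(\lbar{s}_1, u_k)| + \|f(\cdot, u_{k+1}) - f(\cdot, u_k)\|_{\pv;[\lbar{s}_1, \bar{s}_1]}$ and $\beta_l$ defined analogously.

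Next I would extract from these estimates the four boundary-plus-bulk variation norms of $F$ on $[\lbar{s}_3, \bar{s}_3] \times [\lbar{s}_4, \bar{s}_4]$ needed for a second application of Theorem~\ref{2d-Young}. The factored bound on $|F_{u_k u_{k+1}}^{u'_l u'_{l+1}}|^p$ sums as $(\sum_k \alpha_k^p)(\sum_l \beta_l^p)$; a standard common-refinement argument then yields
\[
\sum_k \|f(\cdot, u_{k+1}) - f(\cdot, u_k)\|_{\pv;[\lbar{s}_1, \bar{s}_1]}^p \le \|f\|_{\pv;[\lbar{s}_1, \bar{s}_1] \times [\lbar{s}_3, \bar{s}_3]}^p,
\]
and analogously on the $g$-side, so that $\|F\|_{\pv;[\lbar{s}_3, \bar{s}_3] \times [\lbar{s}_4, \bar{s}_4]}$ is controlled by $\|R\|_{\qv;[\lbar{s}_1,\bar{s}_1]\times[\lbar{s}_2,\bar{s}_2]}$ times a product of $f$- and $g$-variations of the types listed in \eqref{eq:bound-4d}. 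The 1d edge variations of $F$ and its lower-left corner value are bounded in the same spirit. A second invocation of Theorem~\ref{2d-Young}, applied to the outer integral $\int F \, \dd R(s_3, s_4)$, produces \eqref{eq:bound-4d} once the resulting $4 \times 4$ cross terms are regrouped into the two parenthetical blocks involving $f$ and $g$, respectively. Existence of the four-dimensional Young integral follows from the same iterative scheme: Theorem~\ref{2d-Young} provides $F$ as the limit of Riemann-Stieltjes sums on the inner rectangle, and its second application yields $I^{f,g,R}$ on the outer rectangle.

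The main step I expect to require care is the bookkeeping needed to certify that the multitude of intermediate products collapses into the clean form \eqref{eq:bound-4d}; in particular, the non-trivial ingredient is the common-refinement estimate displayed above, which is where the 2d $p$-variation hypothesis on $f$ (as opposed to just its sliced 1d variations) enters in an essential way.
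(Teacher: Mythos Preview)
Your proposal is correct and follows essentially the same route as the paper's own proof: both write $I^{f,g,R}=\int F\,\dd R(s_3,s_4)$ with $F(s_3,s_4)=\int f(s_1,s_3)g(s_2,s_4)\,\dd R(s_1,s_2)$, apply Theorem~\ref{2d-Young} once to control the four boundary-plus-bulk quantities of $F$ (exploiting the tensor-product factorisation of the inner integrand and of its increments), and then apply Theorem~\ref{2d-Young} a second time to the outer integral. Your explicit emphasis on the tensor structure $\phi\otimes\psi$ is exactly the mechanism the paper uses implicitly when it splits $V_1,\dots,V_4$ and the analogous terms in its Steps~2--4; the ``common-refinement'' estimate you single out is precisely the passage the paper records as ``readily checked'' when bounding $V_2$ and $V_4$.
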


\begin{proof} We will divide this proof in several steps. 

\noindent{\bf Step 1:} Decomposition of the integral.  We can write 
\begin{equation}\label{eq:decom-I}
I^{f,g,R}(\lbar{s}_1, \bar{s}_1, \ldots, \lbar{s}_4, \bar{s}_4)=\int_{[\lbar{s}_3, \bar{s}_3]\times[\lbar{s}_4,\bar{s}_4]}F(s_3, s_4) \dd R(s_3, s_4)
\end{equation}
where the function $F$ is defined on $[0,T]^2$ by 
\begin{equation}\label{eq:F}
F(s_3,s_4)=\int_{[\lbar{s}_1, \bar{s}_1]\times[\lbar{s}_2, \bar{s}_2]} f(s_1,s_3) g(s_2,s_4)\dd R(s_1,s_2),
\end{equation}
and where we observe that  the right-hand side of \eqref{eq:F} is well  defined thanks to Theorem~\ref{2d-Young}.    Our strategy in order to estimate $I^{f,g,R}$ will rely on some succesive applications of \eqref{eq:bound-2d}. 
Specifically, with \eqref{eq:decom-I} in mind, relation \eqref{eq:bound-2d} yields
\begin{multline}\label{eq:bound-I}
\left|I^{f,g,R}(\lbar{s}_1, \bar{s}_1, \lbar{s}_2, \bar{s}_2, \lbar{s}_3, \bar{s}_3, \lbar{s}_4, \bar{s}_4)\right|
\le C_{p,q} \Big(|F(\lbar{s}_3, \lbar{s}_4)|
+\|F(\lbar{s}_3,\cdot)\|_{\pv,[\lbar{s}_4, \bar{s}_4]} \\ 
+\|F(\cdot, \lbar{s}_4)\|_{\pv,[\lbar{s}_3,\bar{s}_3]} 
 +\|F\|_{\pv,[\lbar{s}_3, \bar{s}_3]\times[\lbar{s}_4,\bar{s}_4]}\Big) \| R\|_{\qv,[\lbar{s}_3, \bar{s}_3]\times[\lbar{s}_4,\bar{s}_4]}. 
\end{multline}
We will now estimate the terms in right-hand side of \eqref{eq:bound-I} separately. 

\noindent{\bf Step 2:} Upper bound for $F(\lbar{s}_3, \lbar{s}_4)$. Given $(\lbar{s}_3, \lbar{s}_4)\in[0,T]^2$ and recalling the definition \eqref{eq:F} of $F$, another application of \eqref{eq:bound-2d} enables to write
 \begin{multline*}
|F(\lbar{s}_3,\lbar{s}_4)|
\le 
C_{p,q}\Big( |f(\lbar{s}_1, \lbar{s}_3)g(\lbar{s}_2, \lbar{s}_4)|
+ |f(\lbar{s}_1,\lbar{s}_3)|\|g(\cdot, \lbar{s}_4)\|_{\pv,[\lbar{s}_2, \bar{s}_2]} \\
+|g(\lbar{s}_2, \lbar{s}_4)|\|f(\cdot, \lbar{s}_3)\|_{\pv,[\lbar{s}_1, \bar{s}_1]} 
+\|f(\cdot, \lbar{s}_3)\|_{\pv,[\lbar{s}_1,\bar{s}_1]}\|g(\cdot, \lbar{s}_4)\|_{\pv,[\lbar{s}_2,\bar{s}_2]} \Big)
\|R\|_{\qv,[\lbar{s}_1,\bar{s}_1]\times[\lbar{s}_2,\bar{s}_2]}
\end{multline*}
and we notice that the above expression can be simplified as 
 \begin{align}
|F(\lbar{s}_3,\lbar{s}_4)|\le & C_{p,q}\Big( |f(\lbar{s}_1, \lbar{s}_3)| +\|f(\cdot, \lbar{s}_3)\|_{\pv,[\lbar{s}_1, \bar{s}_1]}\Big)\notag\\
&\qquad \times \Big(|g(\lbar{s}_2, \lbar{s}_4)|+\|g(\cdot, \lbar{s}_4)\|_{\pv,[\lbar{s}_2, \bar{s}_2]} \Big)\|R\|_{\qv,[\lbar{s}_1,\bar{s}_1]\times[\lbar{s}_2,\bar{s}_2]}. \label{eq:bound-F1}
\end{align}

\noindent{\bf Step 3:} Upper bound for  $\|F(\lbar{s}_3,\cdot)\|_{\pv,[\lbar{s}_4,\bar{s}_4]}$. Recall the Definition \ref{def:var-norms-on-C2} of $p$-variation. We thus have
\[\|F(\lbar{s}_3,\cdot)\|_{\pv,[\lbar{s}_4, \bar{s}_4]}=\sup_{\pi} \left(\sum_i |F(\lbar{s}_3, v_{i+1})-F(\lbar{s}_3, v_i)|^p\right)^{1/p}.
\]
Plugging expression \eqref{eq:F} into the above relation, we get 
\[ \|F(\lbar{s}_3,\cdot)\|_{\pv,[\lbar{s}_4, \bar{s}_4]}^{p}
=\sup_{\pi}\sum_{i}\left| \int_{[\lbar{s}_1,\bar{s}_1]\times[\lbar{s}_2,\bar{s}_2]} f (s_1, \lbar{s}_3) \Big(g(s_2,v_{i+1})-g(s_2, v_i)  \Big) dR(s_1,s_2)\right|^p .
\]
We now apply \eqref{eq:bound-2d} again and we end up with
\begin{equation}\label{eq:bound-F2}
\|F(\lbar{s}_3,\cdot)\|_{\pv,[\lbar{s}_4, \bar{s}_4]}\le C_{p,q}\|R\|_{\qv,[\lbar{s}_1, \bar{s}_1]\times[\lbar{s}_2, \bar{s}_2]}\sum_{k=1}^4V_k ,
\end{equation}
where the terms $V_1,V_{2}$ are respectively defined by 
\begin{align*}
V_1&= |f(\lbar{s}_1,\lbar{s}_3)|\sup_\pi \left(\sum_{i}\left|g(\lbar{s}_2,v_{i+1})-g(\lbar{s}_2,v_i)\right|^p\right)^{1/p};\\
V_2 &= |f(\lbar{s}_1, \lbar{s}_3)| \sup_{\pi} \left(\sum_{i} \|g(\cdot, v_{i+1})-g(\cdot, v_i)\|_{\pv,[\lbar{s}_2, \bar{s}_2]}^p\right)^{1/p},
\end{align*}
and similarly the terms $V_{3},V_{4}$ are expressed as
\begin{align*}
V_3&=\sup_{\pi} \left(\sum_{i} |g(\lbar{s}_2, v_{i+1})-g(\lbar{s}_2, v_i)|^p \|f(\cdot, \lbar{s}_3) \|_{\pv,[\lbar{s}_1, \bar{s}_1]}^p\right)^{1/p};\\
V_4& =\sup_{\pi} \left(\sum_{i} \| f(\cdot, \lbar{s}_3) (g(*, v_{i+1})-g(*, v_i))\|_{\pv,[\lbar{s}_1,\bar{s}_1]\times[\lbar{s}_2,\bar{s}_2]}^p\right)^{1/p}.
\end{align*}
In addition, the terms $V_1,V_2, V_3$ are easily bounded. Indeed, resorting again to Definition~\ref{def:var-norms-on-C2}, we get
\begin{equation}\label{eq:V1}
V_1=|f(\lbar{s}_1, \lbar{s}_3)| \|g(\lbar{s}_2, \cdot)\|_{\pv,[\lbar{s}_4, \bar{s}_4]},
\qquad
V_2\le|f(\lbar{s}_1, \lbar{s}_3)| \|g\|_{\pv,[\lbar{s}_2, \bar{s}_2]\times[\lbar{s}_4,\bar{s}_4]},
\end{equation}
and
\begin{equation}\label{eq:V3} 
V_3=\|f(\cdot, \lbar{s}_3)\|_{\pv,[\lbar{s}_1, \bar{s}_1]}\|g(\lbar{s}_2, \cdot)\|_{\pv,[\lbar{s}_4, \bar{s}_4]} \, .
\end{equation}
For the term $V_4$, by Definition \ref{def:2d-variation}, it is  readily checked that 
\begin{align}
V_4&\le \|f(\cdot,\lbar{s}_3)\|_{\pv,[\lbar{s}_1,\bar{s}_1]} \|g\|_{\pv,[\lbar{s}_2,\bar{s}_2]\times[\lbar{s}_4,\bar{s}_4]}\,. \label{eq:V4}
\end{align}
Hence, plugging \eqref{eq:V1}, \eqref{eq:V3} and \eqref{eq:V4} into \eqref{eq:bound-F2}, we end up with
 \begin{multline}\label{eq:bound-F3}
\|F(\lbar{s}_3, \cdot)\|_{\pv,[\lbar{s}_4,\bar{s}_4]}
\le C_{p,q} \Big(|f(\lbar{s}_1, \lbar{s}_3)|+\|f(\cdot, \lbar{s}_3)\|_{\pv,[\lbar{s}_1, \bar{s}_1]}|\Big)
\\
\Big(\|g(\lbar{s}_2, \cdot)\|_{\pv,[\lbar{s}_4,\bar{s}_4]} 
+ \|g\|_{\pv,[\lbar{s}_2, \bar{s}_2]\times[\lbar{s}_4,\bar{s}_4]}\Big)
\|R\|_{\qv,[\lbar{s}_1, \bar{s}_1]\times[\lbar{s}_2, \bar{s}_2]} .
\end{multline}
Furthermore, notice that in a similar way we get
 \begin{multline}\label{eq:bound-F4}
\|F(\cdot,\lbar{s}_4)\|_{\pv,[\lbar{s}_4,\bar{s}_4]} 
\le C_{p,q} \Big(|g(\lbar{s}_2, \lbar{s}_4)|
+\|g(\cdot, \lbar{s}_4)\|_{\pv,[\lbar{s}_2, \bar{s}_2]}|\Big)\\
\Big(\|f(\lbar{s}_1, \cdot)\|_{\pv,[\lbar{s}_3,\bar{s}_3]} 
+ \|f\|_{\pv,[\lbar{s}_1, \bar{s}_1]\times[\lbar{s}_3,\bar{s}_3]}\Big)\|R\|_{\qv,[\lbar{s}_1, \bar{s}_1]\times[\lbar{s}_2, \bar{s}_2]} .\end{multline}

\noindent{\bf Step 4:} Upper bound for  $\|F\|_{\pv,[\lbar{s}_3,\bar{s}_3]\times[\lbar{s}_4,\bar{s}_4]}$.  According to Definition \ref{def:2d-variation}, one can write 
\[\|F\|^p_{\pv,[\lbar{s}_3,\bar{s}_3]\times[\lbar{s}_4,\bar{s}_4]} =\sup_{\pi}\sum_{t_i, t_j'} \left|F(t_i, t'_j)+F(t_{i+1}, t'_{j+1})-F(t_i, t'_{j+1})-F(t_{i+1}, t_j')\right|^p\, ,\]
where we recall that $\pi$ takes the form $\pi\in\mathcal D([\lbar{s}_3, \bar{s}_3])\times \mathcal D([\lbar{s}_4, \bar{s}_4])$
and the notation $\mathcal D([s,t])$ is introduced in Section \ref{sec:notation}.  Hence with the expression \eqref{eq:F} of $F$ in mind we get
\begin{multline*}
\|F\|^p_{\pv,[\lbar{s}_3,\bar{s}_3]\times[\lbar{s}_4,\bar{s}_4]} \\
=\sup_{\pi}\sum_{t_i, t_j'} 
\left|\int_{[\lbar{s}_1,\bar{s}_1]\times
[\lbar{s}_2,\bar{s}_2]} (f(s_1, t_{i+1})-f(s_1, t_i))(g(s_2, t_{j+1}'-g(s_2, t_j'))dR(s_1,s_2) \right|^p.
\end{multline*} 
In this context, relation \eqref{eq:bound-2d} can thus be read as 
\begin{equation*}
\|F\|_{\pv,[\lbar{s}_3,\bar{s}_3]\times[\lbar{s}_4,\bar{s}_4]}
\le C \|R\|_{\qv,[\lbar{s}_1,\bar{s}_1]\times[\lbar{s}_2,\bar{s}_2]}\sup_{\pi}\Bigg(\sum_{t_i, t_j'}
\left|Q_{ij'}\right|^p\Bigg)^{1/p},
\end{equation*}
 where the term $Q_{ij'}$ is defined by 
\begin{align*}
&Q_{ij'}
= |(f(\lbar{s}_1, t_{i+1})-f(\lbar{s}_1, t_i))(g(\lbar{s}_2, t_{j+1}')-g(\lbar{s}_2, t_j'))|\\
&\quad +|f(\lbar{s}_1, t_{i+1})-f(\lbar{s}_1, t_i)| \|g(\cdot, t'_{j+1})-g(\cdot, t'_j)\|_{\pv,[\lbar{s}_2,\bar{s}_2]}\\
&\quad+\|f(\cdot, t_{i+1}) -f(\cdot, t_i)\|_{\pv,[\lbar{s}_1,\bar{s}_1]} |g(\lbar{s}_2, t'_{j+1})-g(\lbar{s}_2, t'_j)|\\
&\quad +\|f(\cdot, t_{i+1}) -f(\cdot, t_i)\|_{\pv,[\lbar{s}_1,\bar{s}_1]}\|g(\cdot, t'_{j+1})-g(\cdot, t'_j)\|_{\pv,[\lbar{s}_2,\bar{s}_2]},
\end{align*}
and we notice that $Q_{ij'}$ can easily be simplified as 
\begin{align*}
Q_{ij'}
=&\Big(|(f(\lbar{s}_1, t_{i+1})-f(\lbar{s}_1, t_i))|+\|f(\cdot, t_{i+1}) -f(\cdot, t_i)\|_{\pv,[\lbar{s}_1,\bar{s}_1]}\Big)\\
&~~~\times \Big(|(g(\lbar{s}_2, t_{j+1}')-g(\lbar{s}_2, t_j'))|+\|g(\cdot, t'_{j+1})-g(\cdot, t'_j)\|_{\pv,[\lbar{s}_2,\bar{s}_2]}\Big)\, .
\end{align*}
Summarizing our computations in this step, we have found that 
\begin{multline}
\|F\|_{\pv,[\lbar{s}_3,\bar{s}_3]\times[\lbar{s}_4,\bar{s}_4]} 
\le  C\|R\|_{\qv,[\lbar{s}_1,\bar{s}_1]\times[\lbar{s}_2,\bar{s}_2]} 
\Big(\|f(\lbar{s}_1, \cdot)\|_{\pv, [\lbar{s}_3,\bar{s}_3]}+\|f\|_{\pv,[\lbar{s}_1,\bar{s}_1]\times[\lbar{s}_3,\bar{s}_3]}\Big) \\
\times
\Big(g(\lbar{s}_2, \cdot)\|_{\pv, [\lbar{s}_4,\bar{s}_4]}+\|g\|_{\pv,[\lbar{s}_2,\bar{s}_2]\times[\lbar{s}_4,\bar{s}_4]}\Big)\,.\label{eq:bound-F5}
\end{multline}

\noindent{\bf Step 5:} Conclusion. Let us gather our estimates  \eqref{eq:bound-F1}, \eqref{eq:bound-F3}, \eqref{eq:bound-F4} and \eqref{eq:bound-F5}  into \eqref{eq:bound-I}. Then we let the patient reader check that \eqref{eq:bound-4d} is achieved. This finishes the proof. 
\end{proof}

\subsection{The Hilbert space associated to $x$}\label{sec:wiener-space-general}

Consider a continuous d-dimensional centered Gaussian process $x$ on $[0, T]$ with covariance function $R$ given by \eqref{eq:def-covariance-X}. Every component of $x$ (say $x^1$) is a 1-dimensional centered Gaussian process with covariance $R$. In this section we review some basic facts about the related Hilbert space $\ch$ of functions for which Wiener integrals with respect to $x$ (see e.g. \cite{Nu06}) are well defined.

The Hilbert space $\ch$ is the completion of  the set of step functions
\[
\mathcal{E}
=
\left\{  \sum_{i=1}^{n}a_{i} \1_{\left[  0,t_{i}\right]  }:a_{i}\in%
\mathbb{R}
\text{, }t_{i}\in\left[  0,T\right], i=1,\dots, n \text{ for } n\in\mathbb N    \right\}  ,
\]
with respect to  the inner product
\begin{equation*}
\left\langle \sum_{i=1}^{n} a_{i} \1_{[0,t_{i}]}  ,
\sum_{j=1}^{m}b_{j} \1_{[0,s_{j}]}  \right\rangle _{\mathcal{H}}
=
\sum_{i=1}^{n}\sum_{j=1}^{m}a_{i}b_{j}R\left(  t_{i},s_{j}\right).
\end{equation*}
Observe that this inner product can also be written as
\begin{align}\label{eq:def-inner-pdt-H}
\left\langle \sum_{i=1}^{n} a_{i} \1_{[0,t_{i}]}  ,
\sum_{j=1}^{m}b_{j} \1_{[0,s_{j}]}  \right\rangle _{\mathcal{H}}
= &\int_0^T\int_0^T\left( \sum_{i=1}^{n}a_{i}\1_{[0,t_i]}(t)\right)\left( \sum_{j=1}^{m} b_j\1_{[0,s_j]}(s)\right) ~dR(t,s).
\end{align}
One can further relate $\ch$ to our driving process $x$ in the following way:
let $\mathbf H $ be the closure of the set 
\[
{\bf \mathsf{E}}=\left\{  \sum\nolimits_{i=1}^{n}a_{i}x^1_{t_{i}}:a_{i}\in
\mathbb{R}
,\text{ }t_{i}\in\left[  0,T\right]  , i=1, \dots, n \text{ for  }n\in%
\mathbb{N}
\right\} ,
\]
in $L^2(\Omega, \cf, \mathbf P)$. Then the linear map $x^1: \mathcal E\to \mathsf E$ defined by $x^1(\1_{[0,t]})=x^1_t$ extends to a linear isometry between $\mathcal H$ and $\mathbf H$. Hence,  $\mathbf H=\{x^1(h), h\in \mathcal H\}$ and this family is known as the isonormal Gaussian process related to $x^1$ (see \cite[Definition 1.1.1]{Nu06}). Note that $x^1(h)$  for $h\in \mathcal H$ is called the Wiener integral of $h$ with respect to $x^1$ and is usually denoted by $\int_0^T h(s)dx^1_s$.

{
\begin{remark}\label{representation H norm}
Recall that we have assumed $x_0=0$ and thus $R(0,0)=0$. Thus relation~\eqref{eq:def-inner-pdt-H} suggests 
\begin{align}\label{rep H norm}
\langle h_1, h_2\rangle_\mathcal{H}=\int_0^T\int_0^Th_1(s)h_2(t)dR(s,t) ~~\text{ for } h_1, h_2\in\mathcal{H},
 \end{align}
whenever the 2D Young's integral on the right-hand side is well-defined (see, e.g., \cite[Proposition 4]{CFV} for details).
\end{remark}
}

\begin{remark}\label{rmk:H-on-subinterval}
Denoting by $\mathcal{E}([a,b])$ the set of step functions in $\mathcal E$  restricted on $[a,b]\subset[0,T]$,  the closure $ \mathcal H([a,b])$ of  $\mathcal E([a,b])$ with respect to the inner product \eqref{eq:def-inner-pdt-H}  then coincides with $\ch$ restricted on $[a,b]$, and for $f,g\in \ch$, 
\begin{equation}\label{eq:norm-H-as-2d-young}
\lla f \, \1_{[a,b]} , \, g \, \1_{[a,b]} \rra_{\ch}
=
\lla f  , \, g  \rra_{\ch([a,b])}.
\end{equation}
\end{remark}

\subsection{Malliavin calculus for Gaussian processes}\label{sec-Mal}

In this subsection, we collect some basic concepts of Malliavin calculus, and we refer  to \cite{Nu06} for more details.

Recall that  $x_t$ is a continuous centered $d$-dimensional Gaussian process with i.i.d.\ components, defined on a complete probability space $(\Omega, \cf, \bp)$. For the sake of simplicity, we assume  that $\cf$ coincides with the $\sigma$-algebra  generated by $\{x_{t}; \, t\in[0,T]\}$. For the $d$-dimensional process $x$, we define an extension of the Wiener integral defined as follows: let $\varphi=(\varphi^1,\ldots, \varphi^d)$ be an element of $\ch^d$ where we recall that $\ch$ has been introduced in Section~\ref{sec:wiener-space-general}.  Then we set 
\begin{equation}\label{e:Wiener-int}
x(\varphi)=\sum_{j=1}^dx^j(\varphi^j)\,, 
\end{equation}
where each term $x^j(\varphi^j)$ is a 1-d Wiener integral as in Section \ref{sec:wiener-space-general}.

A smooth functional of $x$ is a random variable of the form $F=f(x(\varphi_1), \ldots, x(\varphi_n))$, where $n\ge 1$, $\{\varphi_1,\ldots, \varphi_n\}$ is a family of elements of $\ch^d$ and each $x(\varphi_i)$ is understood as in \eqref{e:Wiener-int}. Moreover, we assume that the function $f:\R^n\to\R$ is smooth and its partial derivatives grow at most polynomially fast. Then, the Malliavin derivative $\mathbf DF$ of $F$ is the $\ch^d$-valued random variable defined by 
\begin{equation}\label{e:DF}
\mathbf DF=\sum_{k=1}^n\frac{\partial f}{\partial x_k}(x(\varphi_1),\dots, x(\varphi_n)) \varphi_k. 
\end{equation}
One can show that  $\mathbf D$ is closable from $L^2(\Omega)$ to $L^2(\Omega; \ch)$, and thus one may span the space of the smooth and cylindrical random variables under the norm
\[\|F\|_{1,2}=\left(\E[F^2]+\E[\|\mathbf DF\|_\ch^2]\right)^{\frac12}\,. \]
The resulting closure is called Sobolev space $\mathbb D^{1,2}$. 

\begin{remark}\label{remark:DF}
As seen in \eqref{e:DF}, the Malliavin derivative $\mathbf DF$ of a functional $F$ is a $\R^d$-valued process. The $i$-th coordinate of $\mathbf DF$ corresponds to the Malliavin derivative of $F$ with respect to the randomness  in $x^i$ only. It will be denoted by $\mathbf D^i F$ in the sequel. 
\end{remark}

The divergence operator $\delta^\diamond$ (also known as the Skorohod integral) is the adjoint operator of the Malliavin derivative operator $\mathbf D$ defined by the duality  relation
\[\E[F\delta^\diamond(u)]=\E[\langle \mathbf DF, u\rangle_{\ch^d}],~~\text{ for all }  F\in \mathbb D^{1,2} \text{  and } \text{ for all }  u\in \text{Dom } \delta^\diamond.\]
 Here $\text{Dom } \delta^\diamond$ is the domain of the divergence operator $\delta^\diamond$, which is the space of $\ch$-valued random variables $u\in L^2(\Omega; \ch^d)$ such that  $|\E[\langle \mathbf DF, u\rangle_{\ch^d}]|\le c_F \|F\|_2$ with some constant $c_F$ depending on $F$, for all $F\in \mathbb D^{1,2}$. In particular, $\mathbb D^{1,2}(\ch^d)\subset \text{Dom } \delta^\diamond.$ Note that  for $u\in \text{Dom } \delta^\diamond$, we have $\delta^\diamond(u)\in L^2(\Omega)$ and $\E [\delta^\diamond(u)]=0$.  By convention,  we also take the following notation, for $u\in \text{Dom } \delta^\diamond$,
\begin{equation}\label{e:divergence}
\int_0^Tu_t \, \ddi x_t:=\delta^\diamond(u).
\end{equation}

For our main computations below we shall invoke the following relation taken from \cite{NTa}: for any $G\in\mathbb D^{1,2}(\R^d)$ and $0\le a <b \le T$ we have 
\begin{equation}\label{e:relation}
\delta^\diamond(G \, \1_{[a,b]})=\sum_{i=1}^d \int_a^b G^i \, \dd^\diamond x_t^i=\sum_{i}^d G^i\diamond \delta x_{ab}^i,
\end{equation}
where $\diamond$ stands for the Wick product (see \cite{HJT} for a brief account on Wick products). Moreover, according to \cite[Proposition 4.7]{HY}, relation \eqref{e:relation} can be simplified as 
\begin{equation}\label{e:relation'}
\delta^\diamond(G \, \1_{[a,b]}) =\sum_{i=1}^d G^i \, \delta x_{ab}^i-\langle \mathbf D^i G^i, \1_{[a,b]}\rangle_\mathcal H. 
\end{equation}

\subsection{Discrete rough paths techniques}

In this subsection, we develop some inequalities about discrete sums in a rough paths context. This kind of sum will feature prominently in the analysis of our Skorohod-Stratonovich corrections.


We state a crucial lemma about convergence of discrete sums in the second chaos of $x$. It generalizes \cite[Lemma 3.4]{LT} to a generic Gaussian process (as opposed to the fractional Brownian motion case handled in \cite{LT}).

\begin{proposition}\label{prop:bound-F} 
Let $x$ be a $\R^{d}$-valued Gaussian process satisfying  Hypotheses \ref{hyp:2d-var-R} and ~\ref{hyp:var-R}.
For $n\ge 1$ we consider the uniform partition on $\ott$, namely $t_k=\frac kn T$. We define a process $F=\{F_{t}^{ij}; t\in  \ll 0, T\rr, i,j=1,\ldots,d  \}$ by $F_0^{ij}=0$, and for all $t>0$,
\begin{equation}\label{eq:def-F}
F_{t}^{ij}=\sum\limits_{t_k=0}^{t_-} \Big(\bx_{t_kt_{k+1}}^{2;ij}-\be[\bx_{t_kt_{k+1}}^{2;ij}]\Big)= \begin{cases}
\sum\limits_{t_k=0}^{t_-}\bx_{t_kt_{k+1}}^{2;ij}~,& i\neq j,\\
\sum\limits_{t_k=0}^{t_-}\Big(\bx_{t_kt_{k+1}}^{2;ii}-\be[\bx_{t_kt_{k+1}}^{2;ii}]\Big)~, & i=j,
\end{cases}
\end{equation}
where we recall the notation $t_-$ from Section \ref{sec:notation}  and where $\bx^2$ is introduced in Definition~\ref{def:RP}.  Then for all $q\ge 1$, $\rho'\in(\rho,2)$, $(s,t)\in \cs_{2}(\ll 0, T\rr)$ and $n\ge 1$ the following inequality holds true
\begin{equation}\label{e:estimation-dF}
\left(\be\lc\left|\delta F_{st}^{ij}\right|^q\rc\right)^{1/q}\le
C \frac{(t-s)^{\frac12}}{n^{\beta-\frac12}}\,, 
\end{equation}
 where  $C=C(q,\rho,T),\beta=\frac1\rho\in(1/2,1]$  for  $i\neq j$,  and   $C=C(q,\rho, \rho',T), \beta=\frac1{\rho'}\in (1/2,1)$  for  $i=j$.

\end{proposition}

\begin{proof} Due to the hyper-contractivity property of the second Wiener chaos, it suffices to show the case $q=2$. In addition, we assume (without loss of generality) that $s=t_{m_1} <t=t_{m_2}$ for $0\le m_1<m_2\le n$. 

\noindent
\textit{Case 1: $i=j$.} In this case, due to the definition \eqref{eq:def-F} of $F$ and the geometric nature of $\bx$ assumed in Definition \ref{def:RP}, we have
\begin{equation*}
\be\lc\left(\delta F_{st}^{ii}\right)^2\rc=
\be\left(\sum_{k=m_1}^{m_2-1}  \left[(\bx^{\1;i}_{t_kt_{k+1}})^2- \be[(\bx^{\1;i}_{t_kt_{k+1}})^2]\right]\right)^2,
\end{equation*}
and expanding the square on the right hand side above we get
\begin{equation}\label{a2}
\be\lc\left(\delta F_{st}^{ii}\right)^2\rc=
\sum_{k,l=m_1}^{m_2-1} \left\{ \be[(\bx^{\1;i}_{t_kt_{k+1}})^2(\bx^{\1;i}_{t_lt_{l+1}})^2]- \be[(\bx^{\1;i}_{t_kt_{k+1}})^2]\be[(\bx^{\1;i}_{t_lt_{l+1}})^2]\right\}.
\end{equation}
In order to evaluate the right-hand side of \eqref{a2} we apply a particular case of Wick's formula for centered Gaussian random variables $X$ and $Y$, which can be stated as:
\[
\be[X^{2} Y^{2}] - \be[X^{2}]\, \be[Y^{2}]
= 2 \lp \be[X \, Y] \rp^{2}. 
\]
Plugging this result into \eqref{a2} and recalling the definition \eqref{eq:rect-increment-cov-fct} of $R_{st}^{uv}$ we obtain
\begin{equation}\label{a21}
\be\lc\left(\delta F_{st}^{ii}\right)^2\rc=
2 \sum_{k,l=m_1}^{m_2-1} 
\left( \be \left[ \bx^{\1;i}_{t_kt_{k+1}} \bx^{\1;i}_{t_lt_{l+1}}\right]\right)^2
=2 \sum_{k,l=m_1}^{m_2-1} \lp R^{t_kt_{k+1}}_{t_lt_{l+1}} \rp^{2}.
\end{equation}
Therefore invoking elementary properties of $p$-variations we end up with
\begin{align}\label{e:delta-F2}
\be\lc\left(\delta F_{st}^{ii}\right)^2\rc
\le&
2\sup_{k,l} \left( R^{t_k t_{k+1}}_{t_l t_{l+1}} \right)^{2-\rho} \|R\|^\rho_{\rho-\text{var};[s,t]^2}.
\end{align}
  On the right-hand side of \eqref{e:delta-F2}, notice that  under Hypothesis \ref{hyp:var-R}, $\|R\|^\rho_{\rho-\text{var};[s,t]^2}$ can be upper bounded by $C(t-s)$ thanks to \eqref{e:Holder-control-var-R}. Moreover, a simple use of Cauchy-Schwarz inequality, together with \eqref{eq:bound-increment-X-L2}, shows that 
\begin{align*}
|R_{t_lt_{l+1}}^{t_k t_{k+1}}| =\left|\E[\bx^{1,i}_{t_lt_{l+1}}\bx^{1,i}_{t_kt_{k+1}}]\right|\le \left( \E\left[\left|\bx^{1,i}_{t_lt_{l+1}}\right|^2\right]\E\left[\left|\bx^{1,i}_{t_kt_{k+1}}\right|^2\right] \right)^\frac12\le \frac{C_T}{n^{\frac1\rho}}.
\end{align*}
Reporting this information into \eqref{e:delta-F2} and recalling that $\beta=\frac1\rho$, it is seen that
\begin{equation}\label{e:estimation-dF1}
\be\lc\left(\delta F_{st}^{ii}\right)^2\rc
\le
C T^{2\beta-1}\frac{(t-s)}{n^{2\beta-1}}.
\end{equation}
 This ends our proof for the case $i=j$.

\noindent
\textit{Case 2: $i\ne j$.}
According to our definition \eqref{eq:def-F}, if $i\ne j$ we have
\begin{equation*}
\be\lc\left(\delta F_{st}^{ij}\right)^2\rc
=
\be\lc\left(\sum\limits_{k=m_1}^{m_2-1} \bx_{t_{k}t_{k+1}}^{\2;ij} \right)^2 \rc  
=
\sum\limits_{k,l=m_1}^{m_2-1} \be\lc  \bx_{t_{k}t_{k+1}}^{\2;ij} \bx_{t_{l}t_{l+1}}^{\2;ij}  \rc.
\end{equation*}
Therefore, invoking the proofs of \cite[Theorem 15.33 and Proposition 15.28]{FV-bk} for the computation of $\be[  \bx_{t_{k}t_{k+1}}^{\2;ij} \bx_{t_{l}t_{l+1}}^{\2;ij}  ]$, we end up with
\begin{equation}\label{a3}
\be\lc\left(\delta F_{st}^{ij}\right)^2\rc
=
\sum_{k,l=m_1}^{m_2-1} \int_{t_k}^{t_{k+1}}\int_{t_l}^{t_{l+1}} 
R_{t_{k}v_{1}}^{t_{l}v_{2}} \, \dd R(v_1, v_2).
\end{equation}
We now fix $(k,l)$ and denote $G(v_1, v_2)=R_{t_{k}v_{1}}^{t_{l}v_{2}}$. Then $G(t_{k},\cdot)=G(\cdot, t_l)=0$. For any  $\rho'\in(\rho, 2)$, Hypothesis \ref{hyp:2d-var-R} implies $R$ has finite 2d $\rho'$-variation, and  Hypothesis \ref{hyp:var-R}  implies both $R(t, \cdot)$ and $R(\cdot, t)$ have finite $\rho'$-variation for all $t\in[0,T]$. Hence resorting to Theorem ~\ref{2d-Young}, we have for some fixed $\rho'\in(\rho, 2),$
 \begin{equation*}
\left| \int_{t_k}^{t_{k+1}}\int_{t_l}^{t_{l+1}} 
 R_{t_{k}v_{1}}^{t_{l}v_{2}} \, \dd R(v_1, v_2) \right|
 \le 
 C\|R\|^2_{ \rho'\text{-var};[t_k, t_{k+1}]\times[t_l, t_{l+1}]},
 \end{equation*}
for some constant $C=C(\rho', T)$ depending on $(\rho',T)$ only. Plugging this inequality into \eqref{a3}    we obtain 
 \begin{align*}
\be\lc\left(\delta F_{st}^{ij}\right)^2\rc
\le
&C(\rho', T)\sum_{k,l=m_1}^{m_2-1}\|R\|^2_{\rho'\text{-var};[t_k, t_{k+1}]\times[t_l, t_{l+1}]}\notag\\
&\le C(\rho', T)  \sup_{k,l} \|R\|^{2-\rho'}_{\rho\text{-var};[t_k, t_{k+1}]\times[t_l, t_{l+1}]}  \sum_{k,l=m_1}^{m_2-1}\|R\|^{\rho'}_{\rho'\text{-var};[t_k, t_{k+1}]\times[t_l, t_{l+1}]}. 
\end{align*}
Therefore, thanks to Remark ~\ref{remark:control-2d-variation},  we have
\[\be\lc\left(\delta F_{st}^{ij}\right)^2\rc\le C(\rho', T)\sup_{k,l} \|R\|^{2-\rho'}_{\rho'\text{-var};[t_k, t_{k+1}]\times[t_l, t_{l+1}]}  \sum_{k,l=m_1}^{m_2-1}\omega([t_k, t_{k+1}]\times[t_l, t_{l+1}]),\]
where $\omega$ is a control given in \eqref{e:w}. 
Furthermore,  the super-additivity of $\omega$ yields
\begin{align}
\be\lc\left(\delta F_{st}^{ij}\right)^2\rc
&\le  C(\rho', T)\sup_{k,l} \|R\|^{2-\rho'}_{\rho'\text{-var};[t_k, t_{k+1}]\times[t_l, t_{l+1}]}  ~ \omega([s,t]^2)\notag\\
&\le  C(\rho, \rho', T) \sup_{k,l} \|R\|^{2-\rho'}_{\rho'\text{-var};[t_k, t_{k+1}]\times[t_l, t_{l+1}]} ~ (t-s),\label{e:estimation-dF2}
\end{align}
  where the last inequality is due to \eqref{e:w}, \eqref{e:norms-compare}, and \eqref{e:bound-R-2rho}.   Finally, by Hypothesis 
~\ref{hyp:var-R} (and Remark \ref{remark:holder-R''}),  we have \[\|R\|^{2\rho'}_{\rho'\text{-var};[s,t]\times [u,v]}\le C(\rho, \rho', T)(t-s)(u-v),\]
and therefore  setting $\beta=1/\rho'$, inequality  \eqref{e:estimation-dF2} becomes 
 \begin{align}\label{e:estimation-dF3}
\be\lc\left(\delta F_{st}^{ij}\right)^2\rc
\le
&C(\rho, \rho', T) \left(\frac Tn\right)^{2\beta-1}(t-s).
\end{align}

With \eqref{e:estimation-dF1} and \eqref{e:estimation-dF3} in hand our claim \eqref{e:estimation-dF} is now easily achieved,  which concludes the proof.
\end{proof}

   Note that \eqref{e:estimation-dF} is still valid  for both cases of  $i=j$ and $i\neq j$, if we choose $\beta=\frac1{\rho'}$ for any $\rho'\in(\rho, 2)$.  We now give a weighted version of Proposition \ref{prop:bound-F}, which plays an important role in our correction computations.
\begin{proposition}\label{prop:weighted-sum}
Let $x$ be a $\R^{d}$-valued Gaussian process satisfying Hypotheses ~\ref{hyp:2d-var-R} and ~ \ref{hyp:var-R}.
 Let $\rho'\in(\rho, 2)$ be fixed. For $n\ge 1$ we consider the uniform partition on $\ott$, namely $t_k=\frac kn T$, as well as the process $F$ defined by \eqref{eq:def-F}.
Let now $f$ be a controlled process in the $L^{q}(\oom)$ sense, namely such that there exists a process $g$ fulfilling (in the matrix sense), for some $\gamma\in(\frac14, \frac1{2\rho})$ and for all $q\ge1$,
\begin{equation}\label{eq:controlled-f}
\|f_{t}\|_{q} + \|g_{t}\|_{q} \leq C,
\quad
\|\delta f_{st}  - g_{s} \, \bx^{\1}_{st}\|_{q} \leq C(t-s)^{2\ga},
\quad
\|\delta g_{st} \|_{q} \leq C (t-s)^{\ga}.
\end{equation}
Then   the following estimate holds true for $(s,t) \in \cs_{2}(\ll 0,T \rr)$:
\begin{eqnarray*}
\Big\| \sum_{t_{k}=s}^{t-} f_{t_{k}} \otimes   \delta F_{t_{k}t_{k+1}}\Big\|_{q} &\leq& C\frac{(t-s)^{\frac12}}{n^{\beta-\frac12}},
\end{eqnarray*}
 where   $C=C(q,\rho, \rho',T)$ and $\beta=\frac1{\rho'}\in (1/2,1)$.
\end{proposition}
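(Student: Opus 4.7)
The plan is to combine the controlled decomposition of $f$ with Proposition~\ref{prop:bound-F} and a discrete sewing argument on the partition points $\ll 0,T\rr$. Set $S_{st} := \sum_{t_k \in \ll s, t-\rr} f_{t_k}\otimes \delta F_{t_k t_{k+1}}$, which is additive on partition points. Using the controlled decomposition at the left endpoint $s$, write $f_{t_k} = f_s + g_s\,\bx^{\1}_{s t_k} + r_{s t_k}$ with $\|r_{s t_k}\|_q \le C(t_k-s)^{2\gamma}$, and split
\begin{equation*}
S_{st} = f_s\otimes\delta F_{st} + g_s\otimes T_{st} + U_{st},
\quad
T_{st} := \sum_{t_k}\bx^{\1}_{s t_k}\otimes \delta F_{t_k t_{k+1}},
\quad
U_{st} := \sum_{t_k} r_{s t_k}\otimes \delta F_{t_k t_{k+1}}.
\end{equation*}
The leading term is bounded via Cauchy--Schwarz: $\|f_s\otimes\delta F_{st}\|_q \le \|f_s\|_{2q}\|\delta F_{st}\|_{2q} \le C(t-s)^{1/2}n^{-(\beta-1/2)}$, using Proposition~\ref{prop:bound-F}.

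For the remainder $U_{st}$, I would run a discrete sewing argument. Defining $\bar h_{uv} := r_{su}\otimes \delta F_{uv}$ on partition points, the multiplicative defect is $\delta\bar h_{uvw}=-\delta r_{uv}\otimes\delta F_{vw}$, where $\delta r_{uv} = \delta g_{su}\,\bx^{\1}_{uv} + \tilde r_{uv}$ with $\|\tilde r_{uv}\|_q\le C(v-u)^{2\gamma}$ and $\|\delta g_{su}\|_q\le C(u-s)^{\gamma}$. Combined with $\|\delta F_{vw}\|_q\le C(w-v)^{1/2}n^{-(\beta-1/2)}$ and the hypothesis $\gamma>1/4$, this yields an $L^q$ defect bound whose $(t-s)$-exponent exceeds $1$. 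Since $\bar h_{ss}=0$, a dyadic-refinement sewing argument then gives $\|U_{st}\|_q \le C(t-s)^{1/2}n^{-(\beta-1/2)}$.

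The main obstacle is the cross term $T_{st}$, which lies in a sum of Wiener chaoses of orders $1$ and $3$; its naive defect $-\bx^{\1}_{uv}\otimes\delta F_{vw}$ has $(t-s)$-exponent only $\frac{1}{2\rho}+\frac12\le 1$ for $\rho\ge 1$, so generic sewing is not sharp enough. Instead I would compute
\begin{equation*}
\|T_{st}\|_2^2 = \sum_{k,l}\be\bigl[(\bx^{\1}_{s t_k}\cdot\bx^{\1}_{s t_l})\,(\delta F_{t_k t_{k+1}}\otimes\delta F_{t_l t_{l+1}})\bigr]
\end{equation*}
directly, using the product formula for second-chaos variables to express each summand as an iterated 2D integral against the covariance $R$, in the same spirit as in the proof of Proposition~\ref{prop:bound-F}. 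Then Theorem~\ref{2d-Young} and Lemma~\ref{4d-Young}, combined with the H\"older-controlled 2D $\rho$-variation of $R$ from Hypotheses~\ref{hyp:2d-var-R}--\ref{hyp:var-R}, should deliver $\|T_{st}\|_2\le C(t-s)^{1/2}n^{-(\beta-1/2)}$; hypercontractivity in the fixed-order chaos then upgrades this to the claimed $L^q$ bound. Combining the three estimates yields Proposition~\ref{prop:weighted-sum}.
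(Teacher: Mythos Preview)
Your decomposition $S_{st}=f_s\otimes\delta F_{st}+g_s\otimes T_{st}+U_{st}$ and your identification of $T_{st}$ as the piece requiring a direct chaos computation are both on the right track, and this is indeed the spirit of the argument in \cite{LT} to which the paper defers. However, your sewing treatment of $U_{st}$ does not close.

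With the germ $\bar h_{uv}=r_{su}\otimes\delta F_{uv}$ you correctly compute
\[
\delta\bar h_{uvw}=-\bigl(\delta g_{su}\,\bx^{\1}_{uv}+r_{uv}\bigr)\otimes\delta F_{vw}.
\]
The $r_{uv}$-contribution has \emph{local} exponent $2\gamma+\tfrac12>1$ in $(w-u)$ and sews. But the other contribution is bounded by
\[
C\,(u-s)^{\gamma}(v-u)^{1/(2\rho)}(w-v)^{1/2}\,n^{-(\beta-1/2)},
\]
whose local exponent in $(w-u)$ is only $\tfrac{1}{2\rho}+\tfrac12\le 1$ for $\rho\ge 1$. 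Dyadic sewing requires this local exponent to exceed $1$; having the total $(t-s)$-exponent exceed $1$ (which you verify) is not sufficient, because the factor $(u-s)^{\gamma}$ does not shrink along the dyadic refinement. Concretely, summing the defects over the $2^{j-1}$ splits at level $j$ produces a factor $2^{j(1-\frac{1}{2\rho}-\frac12)}$, which diverges.

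The fix, and this is precisely what \cite{LT} does, is to use a \emph{second-order} germ for the full sum: set
\[
\Xi_{uv}:=f_u\otimes\delta F_{uv}+g_u\otimes T_{uv},
\]
where $T_{uv}=\sum_{t_k\in\ll u,v-\rr}\bx^{\1}_{u t_k}\otimes\delta F_{t_k t_{k+1}}$. The Chen-type relation $\delta T_{uvw}=\bx^{\1}_{uv}\otimes\delta F_{vw}$ makes the problematic cross term cancel, yielding
\[
\delta\Xi_{uvw}=-r_{uv}\otimes\delta F_{vw}-\delta g_{uv}\otimes T_{vw}.
\]
For this defect to sew one needs the \emph{sharper} estimate $\|T_{vw}\|_q\le C(w-v)^{\frac12+\frac{1}{2\rho'}}n^{-(\beta-\frac12)}$ (not merely exponent $\tfrac12$), so that the local exponent is $\gamma+\tfrac12+\tfrac{1}{2\rho'}>1$. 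This sharper bound on $T$ is obtainable by the chaos computation you outline, but you must track the extra $\tfrac{1}{2\rho'}$ coming from the $\bx^{\1}_{u t_k}$ factor; your stated target $\|T_{st}\|_2\le C(t-s)^{1/2}n^{-(\beta-1/2)}$, while not false, is too weak to feed back into the sewing.
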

 
\begin{proof}
This proposition was proved in \cite[Corollary 4.9]{LT} when $x$ is a fractional Brownian motion. Although we generalize this result to a wider class of Gaussian processes, our proof goes along the same lines. Therefore we shall omit the details for sake of conciseness. 
%
\end{proof}


\section{Correction terms in the case $2\le p <3$}

In this section we derive a  correction formula for controlled processes which are also in the domain of the Skorohod integral. As mentioned in the introduction, we have restricted our analysis to the case $p<3$. Although we believe that our methodology could be extended to $p<4$, this generalisation would require a cumbersome study of third order integrals and related weighted sums. 

\begin{theorem}\label{thm}
Let $x$ be a Gaussian rough path with covariance given by \eqref{eq:def-covariance-X} satisfying Hypotheses \ref{hyp:2d-var-R} and \ref{hyp:var-R}  with $\rho\in[1,\frac32)$. This implies that $x$ has finite $p$-variation for $p>2\rho$. We can assume  $\frac1p+\frac1\rho>1$, noting that $\rho<\frac32.$ 

 
  Let $y$ be a second-order controlled process in the sense of Definition \ref{def:ctrld-process'}, and we assume  $\E[\|y\|^2_{p-{\rm var};[0,T]}]<\infty$. In particular, the rough integral $\int_0^t y_r d{\bf x}_r$ is defined as in Proposition~\ref{prop:intg-ctrld-process},  resorting to the convention on inner products of Section \ref{sec:notation}.  We also assume that $y\in\D^{1,2}(\ch^{d})$, so that the Skorohod integral of $y$ given in \eqref{e:divergence} is well defined.   Furthermore, we suppose that  $\mathbf{D}_0 y$ has finite $p$-variation with $\E[\|\mathbf{D}_0y\|^2_{p-{\rm var};[0,T]}]<\infty$, and  $\mathbf{D}y$ has finite 2d $p$-variation with $\E[\|\mathbf{D}y\|^2_{p-{\rm var};[0,T]^2}]<\infty.$ Then for all $t\in\ott$ we have almost surely
\begin{equation}\label{eq:strato-sko-first}
\int_{0}^{t} y_r \, \dd \bx_{r}
=
\int_{0}^{t} y_r \ddi x_r 
+\frac12 \sum_{i=1}^d \int_{0}^{t} y_{r}^{x;ii} \dd R_r 
+\sum_{i=1}^d  \int_{\cs_{2}([0,t])} \left(  \bd^i_{r_{1}} y^i_{r_{2}} -y_{r_{2}}^{x;ii}\right)\dd R(r_{1},r_{2}),
\end{equation}
where we recall from Section \ref{sec:rough-path-above-X}  that $R_r:=R(r,r)$ and where the Malliavin derivative $\mathbf D^i$ is introduced in Remark \ref{remark:DF}.
\end{theorem}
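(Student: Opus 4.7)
I would prove \eqref{eq:strato-sko-first} by comparing Riemann-sum discretizations of the two sides along the uniform partition $\pi_n=\{t_k:=kT/n\}_{k=0}^n$. On the Stratonovich side, Proposition~\ref{prop:intg-ctrld-process} yields, almost surely,
\begin{equation*}
\int_0^t y_r\,\dd\bx_r = \lim_{n\to\infty}\sum_{t_k\in\ll 0,t\rr}\Big(\sum_i y^i_{t_k}\bx^{\1;i}_{t_kt_{k+1}}+\sum_{i,j} y^{x;ij}_{t_k}\bx^{\2;ij}_{t_kt_{k+1}}\Big).
\end{equation*}
On the Skorohod side, I would first establish that the piecewise-constant approximation $y^\pi_r:=\sum_k y_{t_k}\1_{[t_k,t_{k+1})}(r)$ converges to $y$ in $\D^{1,2}(\ch^d)$---this is the forthcoming Lemma~\ref{lem:con-y}, whose proof uses the $p$-variation and Malliavin-regularity assumptions on $y$ and $\bd y$. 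Continuity of $\delta^\diamond$ on $\D^{1,2}(\ch^d)$ together with the Wick identity~\eqref{e:relation'} then yields the $L^2(\oom)$ limit
\begin{equation*}
\int_0^t y_r\,\ddi x_r = \lim_{n\to\infty}\sum_{t_k\in\ll 0,t\rr}\sum_i\Big(y^i_{t_k}\delta x^i_{t_kt_{k+1}}-\langle\bd^i y^i_{t_k},\1_{[t_k,t_{k+1}]}\rangle_\ch\Big).
\end{equation*}

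Subtracting the two discretizations (the first-order terms cancel since $\bx^{\1;i}=\delta x^i$) and splitting $\bx^{\2;ij}=(\bx^{\2;ij}-\E[\bx^{\2;ij}])+\E[\bx^{\2;ij}]$, I am left with a centered weighted sum $\sum_k y^{x;ij}_{t_k}(\bx^{\2;ij}_{t_kt_{k+1}}-\E[\bx^{\2;ij}_{t_kt_{k+1}}])$ plus deterministic pieces. The second-order controlled-process Definition~\ref{def:ctrld-process'} equips $y^{x;ij}$ with precisely the controlled structure ($\delta y^{x;ij}=\sum_m y^{xx;ijm}\bx^{\1;m}+r^{x;ij}$) required by Proposition~\ref{prop:weighted-sum}, and that proposition gives an $O(n^{-(\beta-1/2)})$ bound in $L^q(\oom)$ with $\beta=1/\rho'>1/2$, so the centered part vanishes in the limit.

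Only deterministic pieces then survive. Using the geometricity identity $\E[\bx^{\2;ii}_{t_kt_{k+1}}]=\tfrac12 R^{t_kt_{k+1}}_{t_kt_{k+1}}$, the independence $\E[\bx^{\2;ij}_{t_kt_{k+1}}]=0$ for $i\neq j$, and the representation~\eqref{rep H norm} for the $\ch$-inner product, the remaining limit is
\begin{equation*}
\lim_{n\to\infty}\Big\{\tfrac12\sum_{i,k} y^{x;ii}_{t_k} R^{t_kt_{k+1}}_{t_kt_{k+1}} + \sum_{i,k}\int_0^T\!\!\int_{t_k}^{t_{k+1}}\bd^i_{r_1}y^i_{t_k}\,\dd R(r_1,r_2)\Big\}.
\end{equation*}
The algebraic identity $R^{t_kt_{k+1}}_{t_kt_{k+1}}=(R_{t_{k+1}}-R_{t_k})-2R^{0,t_k}_{t_k,t_{k+1}}$, which follows from the symmetry of $R$ and the normalization $R(0,\cdot)\equiv 0$, rewrites the first sum as a Riemann--Stieltjes sum converging to $\tfrac12\int_0^t y^{x;ii}_r\,\dd R_r$ minus a planar sum $\sum_k y^{x;ii}_{t_k}R^{0,t_k}_{t_k,t_{k+1}}$ that converges to $\int_{0<r_1<r_2<t}y^{x;ii}_{r_2}\,\dd R(r_1,r_2)$. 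For the Malliavin piece, I would split the inner integral by whether $r_1<t_k$, $r_1\in[t_k,t_{k+1}]$, or $r_1>t_{k+1}$: causality of $\bd y$ (inherited from the controlled structure, forcing $\bd^i_{r_1}y^i_{t_k}=0$ for $r_1>t_k$) kills the last region and makes the middle one of vanishing order, while the first region converges to $\sum_i\int_{0<r_1<r_2<t}\bd^i_{r_1}y^i_{r_2}\,\dd R(r_1,r_2)$. Combining these limits gives precisely the right-hand side of~\eqref{eq:strato-sko-first}.

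The hardest part is this last step---the rigorous passage from discrete sums to continuous planar integrals. The key technical point is upgrading $\bd^i_{r_1}y^i_{t_k}$ to $\bd^i_{r_1}y^i_{r_2}$ inside $\int\dd R$ at negligible cost, which requires expanding $\bd y$ as a controlled process in its second variable and bounding the resulting cross-terms through Theorem~\ref{2d-Young} and---crucially---the four-dimensional Young inequality of Lemma~\ref{4d-Young}, precisely tailored for this purpose, using the assumed $\E[\|\bd y\|^2_{p-{\rm var};[0,T]^2}]<\infty$. A secondary but non-trivial step is Lemma~\ref{lem:con-y} itself, whose proof must combine the $p$-variation of $y^x$ and the remainder $r$ with the Malliavin-differentiability hypotheses on $\bd_0 y$ and $\bd y$.
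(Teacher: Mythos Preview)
Your overall strategy matches the paper's proof almost step for step: discretize both integrals along a uniform partition, invoke Lemma~\ref{lem:con-y} for the Skorohod side, subtract, kill the centered second-chaos sums via Proposition~\ref{prop:weighted-sum}, and identify the surviving pieces as Young integrals. The decomposition $R^{t_kt_{k+1}}_{t_kt_{k+1}}=(R_{t_{k+1}}-R_{t_k})-2(R(t_{k+1},t_k)-R(t_k,t_k))$ and the resulting $I_n^i$, $J_n^i$ limits are exactly what the paper does.

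There are, however, two missteps in your sketch of the final stage. First, the claim that causality of $\bd y$ (i.e.\ $\bd^i_{r_1}y^i_{t_k}=0$ for $r_1>t_k$) is ``inherited from the controlled structure'' is not correct: Definition~\ref{def:ctrld-process'} is purely pathwise and says nothing about Malliavin derivatives. The paper also restricts to the simplex $\cs_2$ when writing $\langle\bd^i y^i_{t_k},\1_{[t_k,t_{k+1}]}\rangle_\ch$ as a planar integral, so it is using the same adaptedness implicitly---but you should not present this as a consequence of the controlled decomposition.

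Second, you overcomplicate the passage from $\sum_k\langle\bd^i y^i_{t_k},\1_{[t_k,t_{k+1}]}\rangle_\ch$ to $\int_{\cs_2}\bd^i_{r_1}y^i_{r_2}\,\dd R$. The paper does \emph{not} expand $\bd y$ as a controlled process in its second variable (no such hypothesis is available), nor does it invoke Lemma~\ref{4d-Young} here. It simply observes that $(r_1,r_2)\mapsto\bd^i_{r_1}y^i_{r_2}\,\1_{\cs_2}(r_1,r_2)$ has finite 2d $p$-variation by assumption, that $R$ has finite 2d $\rho$-variation with $p^{-1}+\rho^{-1}>1$, and applies standard convergence of 2d Young Riemann sums. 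Lemma~\ref{4d-Young} is used only inside the proof of Lemma~\ref{lem:con-y}, to control $\|\bd y^\pi-\bd y\|_{(\ch^d)^{\otimes 2}}$; it plays no role in identifying the correction term itself.
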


\begin{proof}
Let $\pi=\pi_{n}$ be the uniform partition of order $n$ of $[0,t]$, whose generic element is still denoted by $t_k=\frac knt.$ A natural discretization of $y$ along $\pi$ is given by 
\begin{equation}\label{eq:y-pi}
 y^\pi(r)=\sum_{k=0}^{n-1} y_{t_k} \1_{[t_k, t_{k+1})}(r), \quad r\in[0,t].
\end{equation}
Notice that we have assumed that  $y\in \D^{1,2}(\ch^d)$. Hence both divergence integrals $\delta^\diamond(y^\pi)$ and $\delta^\diamond(y)$, as given in \eqref{e:divergence}, are well defined. Moreover,  according to \eqref{e:relation}, we have 
\[\int_0^t y_r^\pi \dd^\diamond x_r=\sum_{i=1}^d\sum_{k=0}^{n-1} y_{t_k}^{i}\diamond \mathbf{x}_{t_kt_{k+1}}^{\1;i},\]
and owing to \eqref{e:relation'} this can be recast as 
\begin{equation}\label{e:discrete-int}
\int_0^t y_r^\pi \dd^\diamond x_r =\sum_{i=1}^d \sum_{k=0}^{n-1} y_{t_k}^i \mathbf{x}^{1;i}_{t_kt_{k+1}} -\langle \mathbf{D}^i y_{t_k}^i , \1_{[t_k, t_{k+1}]}\rangle_{\mathcal H} \, .
\end{equation}
In addition, we will prove in the forthcoming Lemma \ref{lem:con-y}  that  $\delta^\diamond(y^\pi)$ converges  in $L^2(\Omega)$ to $\delta^\diamond(y)$. Otherwise stated, for $t\in\ott$ we have
\begin{equation}\label{e:discrete-int'}
\int_{0}^{t} y_r \, \ddi x_r
= \lim_{n\to\infty}\int_{0}^{t} y_r^{\pi} \, \ddi x_r.
\end{equation}
Therefore  combining \eqref{e:discrete-int} and \eqref{e:discrete-int'}, we get the following limit in $L^2(\Omega)$:
\begin{eqnarray}\label{b1}
\int_{0}^{t} y_r \, \ddi x_r
&=&  \lim_{n\to\infty}\sum_{i=1}^d \sum_{k=0}^{n-1} \left(y_{t_k}^i \bx^{\1;i}_{t_kt_{k+1}}
-\langle {\mathbf D}^i y^i_{t_k}, \1_{[t_k, t_{k+1}]} \rangle_\ch \right). 
\end{eqnarray}
On the other hand, owing to the fact that $y$ is a controlled process in the sense of Definition~\ref{def:ctrld-process}, Proposition \ref{prop:intg-ctrld-process} asserts that $\int_{0}^{t} y_r \dd \bx_r$ is defined as a rough paths integral and  hence almost surely we have
\begin{equation}\label{b2}
\int_{0}^{t} y_r \dd \bx_r
=
\lim_{n\to\infty}\left(\sum_{i=1}^d \sum_{k=0}^{n-1} y_{t_k}^i \bx^{\1;i}_{t_kt_{k+1}}
+ \sum_{i,j=1}^d \sum_{k=0}^{n-1} y_{t_k}^{x;ij} \bx^{\2;ij}_{t_kt_{k+1}}\right).
\end{equation}
Gathering relations \eqref{b1} and \eqref{b2}, we get the following expression for the Stratonovich-Skorohod correction term:
\begin{align}\label{b3}
\int_{0}^{t} y_r \dd \bx_r -\int_{0}^{t} y_r \, \ddi x_r 
=  \lim_{n\to\infty}\sum_{i=1}^d\sum_{k=0}^{n-1} 
\left(  \sum_{j=1}^d y_{t_k}^{x;ij} \bx^{2;ij}_{t_kt_{k+1}} 
+ \langle {\mathbf D}^iy^i_{t_k}, \1_{[t_k, t_{k+1}]} \rangle_\ch  \right),
\end{align}
 where the limit on the right-hand side above is understood in probability.
In \eqref{b3}, notice that the left-hand side is well defined thanks to the standing assumptions of our Theorem. Hence the right-hand side of \eqref{b3} also makes sense, and we will now identify the limits therein.

In order to compute the limit for the terms $y_{t_k}^{x;ij} \bx^{2;ij}_{t_kt_{k+1}}$ in \eqref{b3}, observe that $y$ is a second order controlled process according to Definition \ref{def:ctrld-process'}. Hence $y^x$ is a controlled process satisfying relation \eqref{e:y-2}. Since we have assumed that Hypotheses \ref{hyp:2d-var-R} and \ref{hyp:var-R} are fulfilled, Proposition \ref{prop:weighted-sum} for the increment $F$ can be applied with $f=y^x$. Recalling (see \eqref{eq:def-F}) that 
\[\delta F_{t_kt_{k+1}}^{ij}=\bx_{t_kt_{k+1}}^{2;ij}-\E[\bx_{t_kt_{k+1}}^{2;ij}],\]
we end up with the following relation, valid for $i,j=1,\dots, d$,  where the limit has to be considered  in the $L^1(\Omega)$ sense:
\begin{equation}\label{e:60}
\lim\limits_{n\to\infty}  \sum_{k=0}^{n-1}
y_{t_k}^{x;ij} \Big(\bx^{2;ij}_{t_kt_{k+1}}-\be \lc \bx^{2;ij}_{t_kt_{k+1}} \rc\Big)
=0.
\end{equation}
In particular, going back to \eqref{b3}, we get that for $i\ne j$ we have
\begin{equation}\label{e:60'}
\lim\limits_{n\to\infty}  \sum\limits_{i\neq j}^d\sum_{k=0}^{n-1}
y_{t_k}^{x;ij} \bx^{2;ij}_{t_kt_{k+1}}
= 0.
\end{equation}

Let us deal with the left-hand side of \eqref{e:60} when $i=j$. Specifically, we will express the limit of the sums $ \sum_{k=0}^{n-1}
y_{t_k}^{x;ii} \be \lc \bx^{2;ii}_{t_kt_{k+1}} \rc$ as a Young integral. To this aim, notice that $2\bx_{t_kt_{k+1}}^{2,ii}=\left(\bx_{t_kt_{k+1}}^{1,i}\right)^2$ due to the geometric assumption in Definition \ref{def:RP}. Hence invoking the fact that $R_{t_k}=R(t_k, t_k)$ we have
\begin{eqnarray*}
 2\be \lc \bx^{2;ii}_{t_kt_{k+1}} \rc
&=&\be\lc(x_{t_{k+1}}^i-x_{t_{k}}^i)^2\rc
= R_{t_{k+1}}-2R(t_{k+1}, t_k)+R_{t_k}\\
&=&\lp R_{t_{k+1}}-R_{t_k}\rp - 2 \lp R(t_{k+1}, t_{k})-R(t_{k}, t_{k})\rp.
\end{eqnarray*}
Therefore for all $i=1, \dots, d$, we obtain a decomposition of the form
\begin{equation}\label{e:61}
\sum_{k=0}^{n-1}
y_{t_k}^{x;ii} \be \lc \bx^{2;ii}_{t_kt_{k+1}} \rc=\frac12 I_n^i -J_n^i
\end{equation}
where $I_n^i, J_n^i$ are respectively defined by 
\begin{equation}\label{e:62}
I_n^i =\sum_{k=0}^{n-1} y_{t_k}^{x;ii}\delta R_{t_kt_{k+1}}, 
\quad\text{and }\quad 
J_n^i =\sum_{k=0}^{n-1} y_{t_k}^{x;ii} \big(R(t_{k+1}, t_k)-R(t_k, t_k)\big).
\end{equation}

The limit of for the term $I^i_n$ in \eqref{e:61} can be computed easily. Indeed, thanks to Remark~\ref{remark:holder-R} we know that $t\to R_t$ has finite $\rho$-variation. Furthermore, since $y$ is a second order controlled process, Definition \ref{def:ctrld-process'} entails that $y^x$ has finite $p$-variation. We have also mentioned in Theorem \ref{thm} that $p^{-1}+\rho^{-1}>1$. Hence classical Young integration arguments reveal that for $i=1, \dots, d$ we have  almost surely,
\begin{equation}\label{e:63}
\lim_{n\to \infty} I_n^i =\int_0^t y_r^{x;ii}dR_r .
\end{equation}

As far as the term $J_n^i$ in \eqref{e:62} is concerned, let us recast this expression in terms of a 2-d Riemann sum. Namely we define another uniform partition $\{v_l; 0\le l\le n-1\}$ of $[0,t]$, with $v_l=\frac ln t.$ Then we start by writing 
\begin{equation}\label{e:64}
J_n^i=\sum_{k=0}^{n-1} y_{t_k}^{x;ii}\big(R(t_{k+1},v_k)-R(t_{k},v_k) \big).
\end{equation}
In addition, notice that thanks to Remark \ref{remark:holder-R} we have $R(\cdot, 0)=0$. Thus an immediate telescoping sum argument yields the following relation, valid for $k=0, \dots, n-1:$
\[R(t_{k+1}, v_k)-R(t_k, v_k)=\sum_{l=0}^{k-1} R_{v_l v_{l+1}}^{t_k t_{k+1}}.\]
Reporting this identity into \eqref{e:64}, we get 
\begin{equation}\label{e:65'}
J_n^i=\sum_{k=0}^{n-1} y_{t_k}^{x;ii}\sum_{l=0}^{k-1} R_{v_lv_{l+1}}^{t_kt_{k+1}}=\sum_{0\le l<k\le n-1} y_{t_k}^{x;ii} R_{v_lv_{l+1}}^{t_kt_{k+1}}.
\end{equation}
This decomposition prompts us to define a degenerate function $f$ in the plane as $f^{i}(u,v)=y_u^{x;ii}\1_{[0<v<u<t]}$. With this notation in hand, relation \eqref{e:65'} reads
\[J_n^i=\sum_{k,l=0}^{n-1} f^i(t_k, v_l) R_{v_lv_{l+1}}^{t_kt_{k+1}}. \]

In order to analyze the convergence of $J_n^i$, we now argue as follows: first $R$ has a finite 2-dimensional $\rho$-variation. The function $f^i(u,v)=y_u^{x,ii}\1_{[0<v<u<t]}$ is also easily seen to have a finite 2-dimensional $p$-variation  (owing to the fact that $y^{x,ii}$ has finite $p$-variation), and recall that $p^{-1}+\rho^{-1}>1$. Hence standard convergence procedures for 2d-Young integrals show that  almost surely
\begin{equation}\label{e:65}
\lim_{n\to\infty} J_n^i =\int_0^t\int_0^t f^i(u,v) \, \dd R(u, v)=\int_{\cs_{2}([0,t])} y_{r_{2}}^{x;ii} \, \dd R(r_{1},r_{2}). 
\end{equation}

Summarizing our considerations for the case $i=j$, we gather \eqref{e:63} and \eqref{e:65} into the decomposition \eqref{e:61}. We conclude that  almost surely, 
\begin{equation}\label{b4}
\lim\limits_{n\to\infty}   \sum\limits_{i=1}^d\sum_{k=0}^{n-1}
y_{t_k}^{x;ii} \E[ \bx^{2;ii}_{t_kt_{k+1}}]
=
\frac12 \sum_{i=1}^d \int_0^t y_r^{x;ii} \dd R_r- \sum_{i=1}^d
\int_{\cs_{2}([0,t])} y_{r_{2}}^{x;ii} \, \dd R(r_{1},r_{2}).
\end{equation}

We now go back to \eqref{b3}, and handle the terms $\langle \mathbf D^iy^i_{t_k}, \1_{[t_k, t_{k+1}]} \rangle_\ch$ therein. We write the inner product in $\ch$ in an explicit way thanks to \eqref{rep H norm}, which yields
\begin{equation*}
\langle \mathbf D^iy^i_{t_k}, \1_{[t_k, t_{k+1}]} \rangle_\ch
=
\int_0^t\int_0^t \mathbf D_{r_{1}}^{i} y^i_{t_k}\1_{[0,t_k]} (r_{1})  \1_{[t_k, t_{k+1}]}(r_{2}) \, \dd R(r_{1},r_{2}).
\end{equation*}
We thus have
\begin{equation*}
\lim_{n\to\infty}\sum_{i=1}^d \sum_{k=0}^{n-1}  \langle \mathbf D^iy^i_{t_k}, \1_{[t_k, t_{k+1}]} \rangle_\ch
=
\lim_{n\to\infty}\sum_{i=1}^d \sum_{k=0}^{n-1}
\int_0^t\int_0^t \mathbf D_{r_{1}}^{i} y^i_{t_k}\1_{[0,t_k]} (r_{1})  \1_{[t_k, t_{k+1}]}(r_{2}) \, \dd R(r_{1},r_{2}). 
\end{equation*}
We now argue similarly to what we did for \eqref{e:65}. Namely one of our standing assumptions is that $(r_1, r_2)\to \mathbf D^i_{r_1} y_{r_2}^i \1_{\mathcal S_2}(r_1, r_2)$ has a finite 2-dimensional $p$-variation. Since $R$ admits a finite $\rho$-variation and $p^{-1}+\rho^{-1}>1$, standard results concerning convergence of Riemann sums to Young integrals  show that  almost surely we have
\begin{equation}\label{b5}
\lim_{n\to\infty}\sum_{i=1}^d \sum_{k=0}^{n-1}  \langle \mathbf D^iy^i_{t_k}, \1_{[t_k, t_{k+1}]} \rangle_\ch
=\sum_{i=1}^d
\int_{\cs_{2}([0,t])}   \mathbf D^i_{r_{1}} y^i_{r_{2}} \, \dd R(r_{1},r_{2}).
\end{equation}
We can now conclude our proof easily. That is plugging \eqref{e:60}, \eqref{e:60'}, \eqref{b4} and \eqref{b5} into \eqref{b3}, we end up with,  almost surely, 
\begin{multline*}
\int_0^t y_r \dd\bx_r - \int_0^t y_r \ddi x_r \\
= 
\frac12 \sum_{i=1}^d \int_0^t y_r^{x;ii}\dd R_r -\sum_{i=1}^d \int_{\mathcal S_2([0,t])} y_{r_2}^{x;ii} \dd R(r_1, r_2)
+ \sum_{i=1}^d \int_{\mathcal S_2([0,t])} \mathbf D_{r_1}^i y_{r_2}^i \dd R(r_1, r_2),
\end{multline*}
from which the claim \eqref{eq:strato-sko-first}
is immediately deduced. This concludes the proof. 
\end{proof}

We close this section by proving a technical result which has been used in order to derive relation \eqref{e:discrete-int'}.

\begin{lemma}\label{lem:con-y}
Assume the same conditions as in Theorem \ref{thm}. Then  $y^\pi$ defined in \eqref{eq:y-pi} converges to $y$ in $\mathbb D^{1,2}(\mathcal H^{d})$, i.e. $\lim\limits_{|\pi|\to0}\E[\|y^{\pi}-y\|^2_{\ch^d}+\|\mathbf Dy^\pi- \mathbf Dy\|^2_{(\ch^d)^{\otimes 2}}]=0$.
\end{lemma}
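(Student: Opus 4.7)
The plan is to show separately that $\be[\|y^\pi - y\|^2_{\mathcal H^d}]\to 0$ and $\be[\|\mathbf{D}y^\pi - \mathbf{D}y\|^2_{(\mathcal H^d)^{\otimes 2}}]\to 0$, in each case combining almost sure convergence with an integrable envelope so that dominated convergence applies. The almost sure part rests on interpolation between uniform convergence and bounded $p$-variation, while the quantitative envelope relies on the Young inequalities of Theorem \ref{2d-Young} and Lemma \ref{4d-Young}.

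For the $\mathcal H^d$-convergence, I would express the norm as a 2d Young integral via Remark \ref{representation H norm}:
\[
\|y^\pi-y\|^2_{\mathcal H^d}=\sum_{i=1}^d\int_{[0,T]^2}(y^{\pi,i}(s)-y^i(s))(y^{\pi,i}(t)-y^i(t))\,dR(s,t).
\]
Since $y$ is continuous with finite $p$-variation almost surely, the discretization $y^\pi$ converges uniformly to $y$ while satisfying $\|y^\pi\|_{\pv;[0,T]}\le \|y\|_{\pv;[0,T]}$. Picking $p'>p$ close enough to $p$ so that $1/p'+1/\rho>1$ still holds, the interpolation estimate $\|h\|^{p'}_{p'\text{-var};[0,T]}\le(2\|h\|_\infty)^{p'-p}\|h\|^p_{\pv;[0,T]}$ yields $\|y^\pi-y\|_{p'\text{-var};[0,T]}\to 0$ almost surely. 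Theorem \ref{2d-Young} with exponents $p',\rho$ then produces
\[
\|y^\pi-y\|^2_{\mathcal H^d}\le C\,\|y^\pi-y\|^2_{p'\text{-var};[0,T]}\,\|R\|_{\rho\text{-var};[0,T]^2}\longrightarrow 0\quad\text{a.s.},
\]
dominated by the integrable envelope $C\|y\|^2_{\pv;[0,T]}\,\|R\|_{\rho\text{-var};[0,T]^2}$ thanks to the hypothesis $\be[\|y\|^2_{\pv;[0,T]}]<\infty$.

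The Malliavin-derivative convergence proceeds analogously, with the squared $(\mathcal H^d)^{\otimes 2}$-norm expressed as a 4d Young integral against $dR(r_1,s_1)\,dR(r_2,s_2)$ and estimated by Lemma \ref{4d-Young}. Noting that
\[
\mathbf{D}y^\pi(r_1,r_2)-\mathbf{D}y(r_1,r_2)=\sum_k\bigl[\mathbf{D}_{r_1}y_{t_k}-\mathbf{D}_{r_1}y_{r_2}\bigr]\,\mathbf 1_{[t_k,t_{k+1})}(r_2)
\]
is a step-function approximation of $\mathbf{D}y$ in the second variable alone, the same interpolation argument (applied to both the 1d $p$-variation in $r_2$ and the 2d $p$-variation of $\mathbf{D}y-\mathbf{D}y^\pi$) yields a.s.\ $p'$-variation convergence to zero. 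Lemma \ref{4d-Young} then produces an envelope of the form $C\|R\|^2_{\rho\text{-var};[0,T]^2}$ times products of $p'$-variation norms of $\mathbf{D}y-\mathbf{D}y^\pi$, controlled in $L^1(\Omega)$ by $\be[\|\mathbf{D}_0 y\|^2_{\pv;[0,T]}]<\infty$ and $\be[\|\mathbf{D}y\|^2_{\pv;[0,T]^2}]<\infty$, and dominated convergence concludes.

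The main obstacle is verifying the 1d slice regularity demanded by Lemma \ref{4d-Young} in \emph{both} coordinate directions of $\mathbf{D}y$: the lemma needs finite $p'$-variation of $r_1\mapsto (\mathbf{D}y-\mathbf{D}y^\pi)(r_1,r_2)$ at fixed $r_2$, whereas the hypotheses explicitly supply only the $r_2$-slice at $r_1=0$ together with the 2d $p$-variation. Assuming $y_0$ is deterministic so that $\mathbf{D}y_0\equiv 0$, this gap closes via
\[
\mathbf{D}_{r_1}y_{r_2}-\mathbf{D}_{r_1'}y_{r_2}=\mathbf{D}_{r_1}y_{r_2}-\mathbf{D}_{r_1}y_{0}-\mathbf{D}_{r_1'}y_{r_2}+\mathbf{D}_{r_1'}y_{0},
\]
which recasts each 1d $r_1$-increment at fixed $r_2$ as a rectangular 2d increment, yielding $\|r_1\mapsto \mathbf{D}_{r_1}y_{r_2}\|_{\pv;[0,T]}\le \|\mathbf{D}y\|_{\pv;[0,T]^2}$.
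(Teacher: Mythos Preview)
Your strategy---interpolate between uniform convergence and bounded $p$-variation to get $p'$-variation convergence, then apply a global Young bound together with dominated convergence---is genuinely different from the paper's, which instead splits the domain into rectangles $[t_i,t_{i+1}]\times[t_j,t_{j+1}]$ (and their $4$d analogues), applies Theorem~\ref{2d-Young} / Lemma~\ref{4d-Young} on each block, and then extracts smallness from the super-additivity of the $2$d control $\omega=\vvvert R\vvvert_{\rho'}^{\rho'}$. Your route is conceptually cleaner for the first term $\|y^\pi-y\|_{\ch^d}^2$ and avoids fiddling with controls; the paper's route, however, buys something important in the Malliavin part that your approach does not.

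The gap is exactly the one you flag, but your proposed fix imports an assumption that is not in Theorem~\ref{thm}: nothing in the hypotheses forces $y_0$ to be deterministic, so you cannot assume $\bd y_0\equiv 0$, and without it you have no control on the $r_1$-slice $r_1\mapsto\bd_{r_1}y_{r_2}$ needed to invoke Lemma~\ref{4d-Young} globally. The paper sidesteps this entirely by its rectangle decomposition: on each $4$d block the relevant $f$ in Lemma~\ref{4d-Young} is $f(u,s)=\bd_u y_{t_i}-\bd_u y_s$ on $[0,t_{i+1}]\times[t_i,t_{i+1}]$, so the troublesome slice satisfies $f(\cdot,t_i)\equiv 0$ \emph{identically}, and only the $\bd_0 y$-slice and the $2$d $p$-variation of $\bd y$ enter the bound---precisely the quantities assumed finite. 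A secondary technical point: both Theorem~\ref{2d-Young} and Lemma~\ref{4d-Young} as stated require continuous integrands, whereas $y^\pi-y$ and $\bd(y^\pi-y)$ are step functions in one variable; the paper's block decomposition also resolves this, since on each block the step function is constant and the integrand is continuous.
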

\begin{proof}
 According to \eqref{rep H norm}, we have 
\[\|y^{\pi}-y\|_{\ch^d}^2=\sum_{i,j=0}^{n-1}\int_{[t_{i},t_{i+1}]\times[t_{j},t_{j+1}]} \langle  y_{t_{i}}- y_s, y_{t_{j}}- y_t\rangle ~\dd R(s,t),\]
where we recall that $\pi=\{0=t_0<t_1<\dots<t_n=t \}.$ On each rectangle $[t_{i}, t_{i+1}]\times[t_{j}, t_{j+1}]$ we apply Theorem \ref{2d-Young} to the function
\[f_{ij}(s,t)= \langle  y_s-y_{t_{i}}, y_t-y_{t_{j}}\rangle, \]
which is allowed since $f_{ij}$ is easily seen to be a function in $ \cac_2^{p-\text{var}}$. 

Recall that we have assumed $p^{-1}+\rho^{-1}>1$. Throughout the proof, we choose  $p'>p$ and $\rho''>\rho'>\rho$ satisfying \[(p')^{-1}+(\rho')^{-1}>1 ~\text{ and }~ (p')^{-1}+(\rho'')^{-1}>1.\]

  Since we also have $f_{ij}(t_{i}, \cdot)=0$ and $f_{ij}(\cdot, t_{j})=0$, we get 
\begin{align}\label{e:y-y'}
\|y^{\pi}-y\|_{\ch^d}^2
\le &C\sum_{i,j=0}^{n-1} \Big(\|y\|_{p'-{\rm var};[t_{i},t_{i+1}]}\|y\|_{p'-{\rm var};[t_{j},t_{j+1}]}\Big)\|R\|_{\rho'-{\rm var};[t_{i}, t_{i+1}]\times[t_{j},t_{j+1}]}.
\end{align}

In order to bound the right-hand side of \eqref{e:y-y'}, we introduce a new function $\omega_1$, defined by 
\begin{equation}\label{e:w1}
\omega_1([a,b]\times[c,d])=\|y\|^{p'}_{p'-{\rm var};[a,b]}\|y\|^{p'}_{p'-{\rm var};[c,d]}.
\end{equation}
Then it is readily checked that $\omega_1$ is also a 2d-control in the sense of Definition \ref{def:2d-control}. The following is easily deduced from~\eqref{e:y-y'}:
  \begin{align}
\|y^{\pi}-y\|_{\ch^d}^2
\le &C\sup_{i,j}\Big( \omega([t_{i}, t_{i+1}]\times[t_{j}, t_{j+1}])\Big)^{\frac1{\rho'}-\frac1{\rho''}}\notag\\
& \times \sum_{i,j=0}^{n-1} \Big(\omega_1([t_{i}, t_{i+1}]\times[t_{j}, t_{j+1}])\Big)^{\frac1{p'}} \Big( \omega([t_{i}, t_{i+1}]\times[t_{j}, t_{j+1}])\Big)^{\frac1{\rho''}},\label{e:y-y''}
\end{align}
where $\omega$ is the control defined in \eqref{e:w}.  Now both $\omega_1$ and $\omega$ above are 2d-controls. Hence an easy extension of \cite[Exercise 1.9]{FV-bk} to a 2d setting shows that $\omega_1^{1/p'}\omega^{1/{\rho''}}$ is also a 2d-control. Hence one can resort to the super-additivity property of $\omega_1^{1/p'}\omega^{1/{\rho''}}$ in order to deduce the following from \eqref{e:y-y''}:
\begin{equation}\label{e:y-y}
\|y^{\pi}-y\|_{\ch^d}^2
\le  C \sup_{i,j}\Big( \omega([t_{i}, t_{i+1}]\times[t_{j}, t_{j+1}])\Big)^{\frac1{\rho'}-\frac1{\rho''}}   \Big(\omega_1([0,t]^2)\Big)^{\frac1{p'}} \Big( \omega([0, t]^2)\Big)^{\frac1{\rho''}}.
\end{equation}

We now turn to an upper bound on  $\|\mathbf Dy^\pi- \mathbf Dy\|_{(\ch^d)^{\otimes 2}}$. To this aim we first express this quantity using the norm in $(\ch^d)^{\otimes2}$ induced by \eqref{rep H norm}.  This yields
\begin{align}
&\|\mathbf Dy^\pi- \mathbf Dy\|^2_{(\ch^d)^{\otimes 2}}\notag\\
=&\sum_{i,j=0}^{n-1}\int_{[0,t_{i+1}]\times[0,t_{j+1}]\times [t_{i},t_{i+1}]\times[t_{j},t_{j+1}]} \langle  \mathbf{D}_u y_{t_{i}}-\mathbf{D}_u y_s, \mathbf{D}_v y_{t_{j}}-\mathbf{D}_v y_t\rangle ~ \dd R(u,v) \dd R(s,t). \label{e:dy-dy1}
\end{align}
We apply Lemma \ref{4d-Young} to the right-hand side of \eqref{e:dy-dy1} and get
\begin{align*}
&\|\mathbf Dy^\pi- \mathbf Dy\|^2_{(\ch^d)^{\otimes 2}}
\le  C \sum_{i,j=0}^{n-1}  \|R\|_{\rho'-{\rm var};[0, t_{i+1}]\times[0,t_{j+1}]}\|R\|_{\rho'-{\rm var};[t_{i}, t_{i+1}]\times[t_{j},t_{j+1}]}\notag\\
&\qquad \qquad\qquad\times \Big( \|\mathbf{D}_0y_{t_{i}} -\mathbf{D}_0y_{\cdot}  \|_{p'-{\rm var};[t_{i},t_{i+1}]} +\|\mathbf{D}_\cdot y_{t_{i}} -\mathbf{D}_\cdot y_\cdot \|_{p'-{\rm var};[0,t_{i+1}]\times[t_i,t_{i+1}]}\Big)\notag\\
&\qquad \qquad\qquad\times \Big( \|\mathbf{D}_0y_{t_{j}} -\mathbf{D}_0y_{\cdot}  \|_{p'-{\rm var};[t_{j},t_{j+1}]} +\|\mathbf{D}_\cdot y_{t_{j}} -\mathbf{D}_\cdot y_\cdot \|_{p'-{\rm var};[0,t_{j+1}]\times[t_{j},t_{j+1}]}\Big). 
\end{align*}
As a preliminary step, we also bound the variations on intervals of the form $[0,t_j]$ by variations on $[0,T]$. 
Thus one can bound $\|\mathbf Dy^\pi- \mathbf Dy\|^2_{(\ch^d)^{\otimes 2}}$ by
\begin{multline}\label{e:dy-dy'}
C \|R\|_{\rho'-{\rm var};[0, T]^2}\sum_{i,j=0}^{n-1}  \|R\|_{\rho'-{\rm var};[t_{i}, t_{i+1}]\times[t_{j},t_{j+1}]} 
\Big( \|\mathbf{D}_0y \|_{p'-{\rm var};[t_{i},t_{i+1}]}  
+\|\mathbf{D} y \|_{p'-{\rm var};[0,T]\times[t_i,t_{i+1}]}\Big) \\
\times \Big( \|\mathbf{D}_0y \|_{p'-{\rm var};[t_{j},t_{j+1}]} +\|\mathbf{D} y\|_{p'-{\rm var};[0,T]\times[t_{j},t_{j+1}]}\Big).
\end{multline}

We now wish to apply super-additivity properties of the $p$-variations, as we did for \eqref{e:y-y}. However, note that  the function $[a,b]\times [c,d]\mapsto \|\mathbf{D} y \|^{p'}_{p'-{\rm var};[a,b]\times[c,d]}$ may fail to be super-additive (see \cite[Theorem 1]{fv11}). Hence we need to resort to the controlled 2d variation as introduced in Definition \ref{def:controlled-variation}.  Specifically, it follows from \eqref{e:dy-dy'} that $\|\mathbf Dy^\pi- \mathbf Dy\|^2_{(\ch^d)^{\otimes 2}}$ can be upper bounded by
\begin{multline}
C \|R\|_{\rho'-{\rm var};[0, T]^2}\sum_{i,j=0}^{n-1}  \|R\|_{\rho'-{\rm var};[t_{i}, t_{i+1}]\times[t_{j},t_{j+1}]} 
\Big( \|\mathbf{D}_0y \|_{p'-{\rm var};[t_{i},t_{i+1}]} +\vvvert\mathbf{D} y \vvvert_{p'-{\rm var};[0,T]\times[t_i,t_{i+1}]}\Big)\\
 \times \Big( \|\mathbf{D}_0y \|_{p'-{\rm var};[t_{j},t_{j+1}]} +\vvvert \mathbf{D} y\vvvert_{p'-{\rm var};[0,T]\times[t_{j},t_{j+1}]}\Big). \label{e:dy-dy2}
\end{multline}
Notice that the right-hand side of \eqref{e:dy-dy2} is finite,  noting that  $p<p'$ and owing to \eqref{e:norms-compare}. Furthermore, noting that the function $[c, d] \mapsto \vvvert \mathbf Dy \vvvert^{p'}_{p'-{\rm var}; [0,T]\times[c,d]}$ is a control,  we can define the following 2d controls (where we use \cite[Exercise 1.9]{FV-bk} again):
\begin{eqnarray}
\omega_2([a,b]\times[c,d])
&=&
\|\mathbf{D}_0y \|^{p'}_{p'-{\rm var};[a,b]}\|\mathbf{D}_0y \|^{p'}_{p'-{\rm var};[c,d]} \label{e:w2}\\
\omega_3([a,b]\times[c,d])
&=& 
\vvvert\mathbf{D} y \vvvert^{p'}_{p'-{\rm var};[0,T]\times[a,b]}\vvvert\mathbf{D} y \vvvert^{p'}_{p'-{\rm var};[0,T]\times[c,d]} \label{e:w3} \\
\omega_4([a,b]\times[c,d])
&=&
\|\mathbf{D}_0y \|^{p'}_{p'-{\rm var};[a,b]}\vvvert\mathbf{D} y \vvvert^{p'}_{p'-{\rm var};[0,T]\times[c,d]}. \label{e:w4}
\end{eqnarray}
Now, similarly to \eqref{e:y-y},  relation \eqref{e:dy-dy2} entails 
 \begin{align}\label{e:Dy-Dy}
 &\|\mathbf Dy^\pi- \mathbf Dy\|^2_{(\ch^d)^{\otimes 2}} \\
& \le C \|R\|_{\rho'-{\rm var};[0, T]^2} \sup_{i,j}\Big( \omega([t_{i}, t_{i+1}]\times[t_{j}, t_{j+1}])\Big)^{\frac1{\rho'}-\frac1{\rho''}}\notag
  \Big( \omega([0, t]^2)\Big)^{\frac1{\rho''}}   \sum_{k=2}^4\Big(\omega_k([0,t]^2)\Big)^{\frac1{p'}}, 
  \end{align}
 where $\omega$ is the control given in \eqref{e:w}. This is our desired bound for the difference $\mathbf Dy^\pi- \mathbf Dy$.

Let us summarize our considerations so far. Gathering inequalities \eqref{e:y-y} and \eqref{e:Dy-Dy}, we have proved that 
\begin{align}
&\|y^\pi-y\|^2_{\ch^d}+\|\mathbf Dy^{\pi}- \mathbf Dy\|^2_{(\ch)^{\otimes 2}} \label{e:y-y-dy-dy}\\
\le& C\left(1+\|R\|_{\rho'-{\rm var};[0,T]^2}\right) \Big(\omega([0,t]^2)\Big)^{\frac1{\rho''}}\sum_{k=1}^4\Big(\omega_k([0,t]^2)\Big)^{\frac1{p'}} \sup_{i,j}\Big(\omega([t_i, t_{i+1}]\times[t_j, t_{j+1}])\Big)^{\frac1{\rho'}-\frac1{\rho''}}, \notag
\end{align}
where the controls $\omega, \omega_1, \omega_2, \omega_3, \omega_4$ are respectively defined by \eqref{e:w}, \eqref{e:w1}, \eqref{e:w2}, \eqref{e:w3} and \eqref{e:w4}. We can now argue as follows: first, 
 according to \eqref{e:bound-R-2rho}, \eqref{e:norms-compare} and \eqref{e:w} we have
\[\lim_{n\to \infty} \sup_{i,j} \omega([t_i, t_{i+1}]\times[t_j, t_{j+1}])=0. \]
Next we have assumed in Theorem \ref{thm} that 
\[\E\left[\|y\|^2_{p-{\rm var};[0,T]}+\|\mathbf D_0 y\|^2_{p-{\rm var};[0,T]}+\|\mathbf D y\|^2_{p-{\rm var};[0,T]^2}\right]<\infty. \]
Therefore one can take expected valued in \eqref{e:y-y-dy-dy} in order to get 
\[\lim_{n\to \infty} \E\left[\|y^{\pi}-y\|_{\ch^d}^2+\|\mathbf Dy^\pi- \mathbf Dy\|^2_{(\ch^d)^{\otimes 2}} \right]=0,\]
which is our claim. This concludes our proof. 
\end{proof}

 We close this section by showing that our main Theorem \ref{thm} generalizes previous Skorohod-Stratonovich integral correction formulae.

\begin{remark}[A comparison with \cite{HJT}]

In \cite{HJT}, the relationship \eqref{eq:strato-sko-1} is obtained for a $\gamma$-H\"older Gaussian process $x$ with $\gamma\in(0,1)$.  The process $y$ considered in \cite{HJT} is of the special form $y=f(x)$ for $f\in C^{2N}$ with $N=\lfloor\frac1\gamma\rfloor$.

Our main result Theorem \ref{thm} holds for Gaussian processes possessing finite $p$-variation with  $p\in(2, 3)$ (or $
\rho\in[1, \frac32)$). Noting that Propositions \ref{prop:bound-F} and \ref{prop:weighted-sum} hold under Hypotheses \ref{hyp:2d-var-R} and ~\ref{hyp:var-R}  for $p\in(2, 4)$, we believe that our approach could be extended to $p\in(2,4)$. A key difference between $p\in(2,3)$ and $p\in[3,4)$  is that a weighted sum in the third chaos of $x$ will be involved in the rough integral \eqref{b2} for $p\in[3,4)$. Thus for $p\in(2,4)$, to calculate the Skorohod-Stratonovich correction term, we also need develop some estimation for the weighted sum in third chaos of $x$ which is parallel to Proposition  \ref{prop:weighted-sum}.    

Note that the condition $p \in (2,3)$ ($\rho\in[1,\frac32)$) is also used to define the Young integrals appearing in \eqref{e:63}, \eqref{e:65} and \eqref{b5}. However, if we further assume that $R_\cdot$ and $R(\cdot, t)$ for each $t\in[0, T]$  are absolutely continuous  as in  \cite[Hypothesis 3.1]{HJT}, which is satisfied by fractional Brownian motion $B^H$ with Hurst parameter $H\in(0,1)$,   then  the integrals in \eqref{e:63}, \eqref{e:65} and \eqref{b5} are automatically well-defined as Riemann integrals.

\end{remark}

\begin{remark}[A comparison with \cite{CL}]
In \cite{CL}, the relationship between $\int_{0}^{t} y_{r} \dd \bx_{r}$
and $\int_{0}^{t} y_{r} \ddi x_{r}$ is studied, where $x$ satisfies Hypotheses \ref{hyp:2d-var-R} and \ref{hyp:var-R},   and $y$ is the solution to \eqref{eq:rde} with $\sigma$ being sufficiently regular. Indeed, under the conditions assumed in the main Theorem in \cite{CL}, our main result Theorem \ref{thm} also holds. More specifically, $\E[\|y\|^2_{p-{\rm var};[0,T]}]<\infty$ is a consequence of \cite[Theorem 2.25]{CL}; 
 $\E[\|\mathbf{D}_0y\|^2_{p-{\rm var};[0,T]}]+\E[\|\mathbf{D}y\|^2_{p-{\rm var};[0,T]^2}]<\infty$ follows from $\mathbf D_sy_t=\1_{[0,t)}(s) J_t^{\bf X} (J_x^{\bf X})^{-1} \sigma(Y_s)$ and Theorem \cite[Theorem 2.27]{CL} (see also the end of the proof \cite[Proposition 4.10]{CL}). With those relations in mind, our main Theorem \ref{thm} also covers the analysis performed in \cite{CL}.
\end{remark}

\medskip

\paragraph{\textbf{Acknowledgment}}
We would like to thank Tom Cass for some interesting discussions about this project.

\medskip

\end{document}